
\documentclass{amsart}

\usepackage{amsmath,amssymb,amsfonts,enumerate,amsthm,graphicx,color,hyperref}
\usepackage{tikz, float} \usetikzlibrary {positioning}

\renewcommand{\dim}{\mbox{dim}\,}
\renewcommand{\dim}{\mbox{dim}\,}

\newcommand{\set}{\mbox{set}\,}

\newcommand{\bight}{\mbox{bight}\,}

\newcommand{\T}{\mathrm}

\newtheorem{thm}{Theorem}[section]
\newtheorem{cor}[thm]{Corollary}
\newtheorem{lem}[thm]{Lemma}
\newtheorem{prop}[thm]{Proposition}
\newtheorem{defn}[thm]{Definition}

\newtheorem{exam}[thm]{Example}
\newtheorem{rem}[thm]{Remark}

\numberwithin{equation}{section}
\tikzstyle{Cgray}=[draw=black, scale = .4,circle, fill = white, minimum size=7mm]
\tikzstyle{Cwhite}=[scale = .8,circle, fill = white, minimum size=8mm]
\tikzstyle{Cblack}=[scale = .3,circle, fill = black, minimum size=3mm]
\tikzstyle{C0}=[scale = .9,circle, fill = black!0, inner sep = 0pt, minimum size=3mm]
\tikzstyle{C1}=[scale = .7,circle, fill = black!0, inner sep = 0pt, minimum size=3mm]
\tikzstyle{CW}=[scale = .7,circle, fill = white!0, inner sep = 0pt, minimum size=7mm]
\tikzstyle{Cg}=[scale = .8,circle, fill = gray, minimum size=8mm]

\begin{document}
\bibliographystyle{amsplain}

\title[Expansion of a simplicial complex]{Expansion of a simplicial complex}
\author[S. Moradi and F. Khosh-Ahang]{Somayeh Moradi and Fahimeh Khosh-Ahang}
\address{Somayeh Moradi, Department of Mathematics,
 Ilam University, P.O.Box 69315-516, Ilam, Iran and School of Mathematics, Institute
 for Research in Fundamental Sciences (IPM), P.O.Box: 19395-5746, Tehran, Iran.} \email{somayeh.moradi1@gmail.com}
\address{Fahimeh Khosh-Ahang, Department of Mathematics,
 Ilam University, P.O.Box 69315-516, Ilam, Iran.}
\email{fahime$_{-}$khosh@yahoo.com}

\keywords{Cohen-Macaulay, edge ideal, expansion, projective dimension, regularity, shellable, vertex
decomposable.\\
}
\subjclass[2010]{Primary 13D02, 13P10;    Secondary 16E05}

\begin{abstract}
\noindent
For a simplicial complex $\Delta$, we introduce a simplicial complex attached to $\Delta$, called the expansion of $\Delta$, which is a natural generalization of the notion of expansion in graph theory. We are interested in knowing how the  properties of  a
simplicial complex and  its Stanley-Reisner ring  relate to those of its expansions. It is shown that taking expansion preserves vertex decomposable and
shellable properties and in some cases Cohen-Macaulayness. Also it is proved that some homological invariants of Stanley-Reisner ring of a simplicial complex relate to those invariants in the
Stanley-Reisner ring of its expansions.
\end{abstract}

\maketitle

\section*{Introduction}

Simplicial complexes are widely used structures which have many applications in algebraic topology and commutative algebra. In particular, in order to characterize monomial quotient rings with a desired property, simplicial complex is a very strong tool considering the Stanley-Reisner correspondence between simplicial complexes and monomial ideals.
  Characterizing  simplicial complexes which have properties like vertex decomposability, shellability and Cohen-Macaulayness are some main problems in combinatorial commutative algebra.  It is rather hopeless to give a full classification of simplicial complexes with each of these properties.
In this regard, finding classes of simplicial complexes, especially independence complexes of graphs with a desired property have been considered by many researchers (cf. \cite{F,FV,HMV,VVi,W,W1}). Constructing new simplicial complexes from the existing ones satisfying a desired property is another way to know more about the characterization. In the works \cite{CN,DE,FH,Moha,V}, the idea of making modifications to a graph like adding whiskers and ears to the graph in order to obtain sequentially
Cohen-Macaulay, Cohen-Macaulay and vertex decomposable graphs is investigated. In \cite{BV}, the authors developed a construction similar to whiskers to build a vertex decomposable simplicial complex $\Delta_{\chi}$ from a coloring $\chi$ of
the vertices of a simplicial complex $\Delta$, and in \cite{BFHV} for colorings of subsets of the vertices, necessary and sufficient conditions are given for this
construction to produce vertex decomposable simplicial complexes.

\medskip
Motivated by the above works and the concept of expansion of a graph in graph theory, in this paper, we
introduce the concept of expansion of simplicial complexes which is a natural generalization of expansion of graphs. Also, we study some properties of this expansion to see how they are related to corresponding properties of the initial simplicial complex. This tool allows us construct
new vertex decomposable and shellable simplicial complexes from vertex decomposable and shellable ones. Moreover, some families of Cohen-Macaulay simplicial complexes are introduced.  We are also interested in knowing how the homological invariants of the Stanley-Reisner ring of a simplicial complex and its expansions are related.

The paper is organized as follows. In the first section, we review some preliminaries from the literature.
In Section 2, first in Theorem \ref{evd} we show that for a simplicial complex $\Delta$, vertex decomposability of $\Delta$ is equivalent to vertex decomposability of an expansion of $\Delta$. Also it is proved that expansions of a shellable  simplicial complex are again shellable (see Theorem \ref{vI}). Moreover, it is shown that under some conditions, expansions of a simplicial complex inherit Cohen-Macaulayness (see Corollaries \ref{cor2}, \ref{cor3}, \ref{cor1} and \ref{CM}). Finally, in Section 3, for a shellable simplicial complex, the projective dimension and the regularity of its Stanley-Reisner ring are compared with the corresponding ones in an expansion of $\Delta$ (see Propositions \ref{pd} and \ref{shreg}).

\section{Preliminaries}

Throughout this paper, we assume that $\Delta$ is a simplicial complex on the vertex set $V(\Delta)=\{x_1, \dots, x_n\}$. The set of facets (maximal faces) of $\Delta$
is denoted by $\mathcal{F}(\Delta)$.

In this section, we recall some preliminaries which are needed in the sequel. We begin with definition of a vertex decomposable simplicial complex. To this aim, we need to recall definitions of the link and the deletion of a face in $\Delta$.
For a simplicial complex $\Delta$ and $F\in \Delta$, the \textbf{link} of $F$ in
$\Delta$ is defined as $$\T{lk}_{\Delta}(F)=\{G\in \Delta: G\cap
F=\emptyset, G\cup F\in \Delta\},$$ and the \textbf{deletion} of $F$ is the
simplicial complex $$\T{del}_{\Delta}(F)=\{G\in \Delta: G\cap
F=\emptyset\}.$$

\begin{defn}\label{1.1}
{\rm A simplicial complex $\Delta$ is  called \textbf{vertex decomposable} if
$\Delta$ is a simplex, or $\Delta$ contains a vertex $x$ such that
\begin{itemize}
\item[(i)] both $\T{del}_{\Delta}(x)$ and $\T{lk}_{\Delta}(x)$ are vertex decomposable, and
\item[(ii)] every facet of $\T{del}_{\Delta}(x)$ is a facet of $\Delta$.
\end{itemize}
A vertex $x$ which satisfies condition (ii) is called a
\textbf{shedding vertex} of $\Delta$.}
\end{defn}

\begin{rem}\label{remark1}
{\rm It is easily seen that $x$ is a shedding vertex of $\Delta$ if and only if no facet of $\T{lk}_{\Delta}(x)$ is a facet of $\T{del}_{\Delta}(x)$.}
\end{rem}

\begin{defn}
{\rm A simplicial complex $\Delta$ is called \textbf{shellable} if there exists an ordering $F_1<\cdots<F_m$ on the
facets of $\Delta$
such that for any $i<j$, there exists a vertex
$v\in F_j\setminus F_i$ and  $\ell<j$ with
$F_j\setminus F_\ell=\{v\}$. We call $F_1,\ldots,F_m$ a \textbf{shelling} for
$\Delta$.}
\end{defn}
The above definition is referred to as  non-pure shellable and
is due to Bj\"{o}rner and Wachs \cite{BW}. In this paper we will
drop the adjective ``non-pure".

\begin{defn}
{\rm A graded $R$-module $M$ is called
\textbf{sequentially Cohen--Macaulay} (over a field $K$) if there exists a
finite filtration of graded $R$-modules $$0=M_0\subset M_1\subset
\cdots \subset M_r=M$$ such that each $M_i/M_{i-1}$ is
Cohen--Macaulay and
$$\dim(M_1/M_0)<\dim(M_2/M_1)<\cdots<\dim(M_r/M_{r-1}).$$}
\end{defn}

For a $\mathbb{Z}$-graded $R$-module $M$, the \textbf{Castelnuovo-Mumford regularity} (or briefly regularity)
of $M$ is defined as
$$\T{reg}(M) = \max\{j-i: \ \beta_{i,j}(M)\neq 0\},$$
and the \textbf{projective dimension} of $M$ is defined as
$$\T{pd}(M) = \max\{i:\ \beta_{i,j}(M)\neq 0 \ \text{for some}\ j\},$$
where $\beta_{i,j}(M)$ is the $(i,j)$th graded Betti number of $M$.

Let $V = \{x_1,\ldots, x_n\}$ be a finite set, and let $\mathcal{E} = \{E_1,\ldots,E_s\}$ be a family of nonempty subsets
of $V$. The pair $\mathcal{H} = (V, \mathcal{E})$ is called a \textbf{simple hypergraph} if for each $i$, $|E_i| \geq 2$  and   whenever
$E_i,E_j\in \mathcal{E}$ and $E_i \subseteq E_j$, then $i =j$. The elements of $V$ are
called the vertices and the elements of $\mathcal{E}$ are called the edges of $\mathcal{H}$. For a hypergraph $\mathcal{H}$, the \textbf{independence complex} of $\mathcal{H}$ is defined as $$\Delta_{\mathcal{H}}=\{F\subseteq V(\mathcal{H}):\ E\nsubseteq F, \text{ for each } E\in \mathcal{E}(\mathcal{H})\}.$$

A simple graph $G=(V(G), E(G))$ is a simple hypergraph with the vertices $V(G)$ and the edges $E(G)$, where each of its edges has cardinality exactly two.
For a simple graph $G$, the \textbf{edge ideal} of $G$ is defined as the ideal $I(G)=(x_ix_j:\ \{x_i,x_j\}\in E(G))$. It is easy to see that $I(G)$ can be viewed as the Stanley-Reisner ideal of the simplicial complex
$\Delta_{G}$ i.e., $I(G)=I_{\Delta_G}$. Also, the \textbf{big height} of $I(G)$, denoted by $\T{bight}(I(G))$, is
defined as the maximum height among the minimal prime divisors of $I(G)$.

A graph $G$ is called vertex decomposable, shellable, sequentially Cohen-Macaulay or Cohen-Macaulay if
the independence complex $\Delta_G$ is vertex decomposable, shellable, sequentially Cohen-Macaulay or Cohen-Macaulay.

A graph $G$ is called \textbf{chordal}, if it contains no induced cycle of length $4$ or greater.

\begin{defn}\label{1.2}
{\rm
A monomial ideal $I$ in the ring $R=K[x_1,\ldots,x_n]$ has \textbf{linear quotients} if there exists an ordering $f_1, \dots, f_m$ on the minimal generators of $I$ such that the colon ideal $(f_1,\ldots,f_{i-1}):_R(f_i)$ is generated by a subset of $\{x_1,\ldots,x_n\}$ for all $2\leq i\leq m$. We show this ordering by $f_1<\dots <f_m$ and we call it \textbf{an order of linear quotients} on $\mathcal{G}(I)$.
Also for any $1\leq i\leq m$, $\set_I(f_i)$ is defined as
$$\set_I(f_i)=\{x_k:\ x_k\in (f_1,\ldots, f_{i-1}) :_R (f_i)\}.$$
We denote $\set_I(f_i)$ by $\set (f_i)$ if there is no ambiguity about the ideal $I$.
}
\end{defn}

A monomial ideal $I$ generated by monomials of degree $d$ has a \textbf{linear resolution} if $\beta _{i,j}(I)=0$ for all $j\neq i+d$. Having linear quotients is a strong tool to determine some classes of ideals with linear resolution. The main tool in this way is the following lemma.

\begin{lem}(See \cite[Lemma 5.2]{F}.)\label{Faridi}
Let $I=(f_1, \dots, f_m)$ be a monomial ideal with linear quotients such that all $f_i$s are of the same degree. Then $I$ has a linear resolution.
\end{lem}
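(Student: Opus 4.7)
The plan is to induct on the number of minimal generators $m$, following the standard Herzog--Takayama style argument via the Tor long exact sequence. The base case $m=1$ is immediate: a principal monomial ideal generated in a single degree $d$ is free, hence trivially has a $d$-linear resolution.

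For the inductive step, set $J=(f_1,\dots,f_{m-1})$. The truncated list $f_1<\dots<f_{m-1}$ is still an order of linear quotients on $\mathcal{G}(J)$, and all its generators share the common degree $d$, so by the inductive hypothesis $J$ has a $d$-linear resolution. Next, the map $R \to I/J$ sending $1 \mapsto \overline{f_m}$ induces a graded isomorphism $I/J \cong \bigl(R/(J:_R f_m)\bigr)(-d)$. The linear quotients assumption at step $m$ guarantees that $J:_R f_m$ is generated by some subset of $\{x_1,\dots,x_n\}$, say of size $k$. Hence the Koszul complex on those $k$ variables is a minimal free resolution of $R/(J:_R f_m)$ whose $i$-th term sits in internal degree $i$; after the shift by $-d$ this gives a $d$-linear resolution of $I/J$.

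Finally, apply $\Tor^R(-,k)$ to the short exact sequence $0 \to J \to I \to I/J \to 0$ and read off the long exact sequence
$$\cdots \to \Tor_{i+1}(I/J,k)_j \to \Tor_i(J,k)_j \to \Tor_i(I,k)_j \to \Tor_i(I/J,k)_j \to \Tor_{i-1}(J,k)_j \to \cdots.$$
For every bidegree $(i,j)$ with $j\neq i+d$, the two outside terms vanish by $d$-linearity of the resolutions of $J$ and $I/J$, so $\Tor_i(I,k)_j=0$. Therefore $\beta_{i,j}(I)=0$ whenever $j\neq i+d$, which is precisely the assertion that $I$ has a linear resolution.

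There is no serious obstacle here; the only point that requires care is the degree bookkeeping when identifying $I/J$ with the twist $\bigl(R/(J:_Rf_m)\bigr)(-d)$ and when tracking internal degrees through the Koszul complex. The rest is a routine homological dimension chase.
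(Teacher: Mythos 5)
Your argument is correct: the induction on $m$ via the short exact sequence $0\to J\to I\to I/J\to 0$, the identification $I/J\cong\bigl(R/(J:_Rf_m)\bigr)(-d)$ resolved by a Koszul complex on variables, and the Tor long exact sequence together give exactly $\beta_{i,j}(I)=0$ for $j\neq i+d$. The paper supplies no proof of its own (it simply cites \cite[Lemma 5.2]{F}), and your argument is essentially the standard one found in that reference, so there is nothing to reconcile.
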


For a squarefree monomial ideal $I=( x_{11}\cdots
x_{1n_1},\ldots,x_{t1}\cdots x_{tn_t})$, the \textbf{Alexander dual ideal} of $I$, denoted by
$I^{\vee}$, is defined as
$$I^{\vee}:=(x_{11},\ldots, x_{1n_1})\cap \cdots \cap (x_{t1},\ldots, x_{tn_t}).$$
For a simplicial complex $\Delta$ with the vertex set $X=\{x_1, \dots, x_n\}$, the \textbf{Alexander dual simplicial complex} associated to $\Delta$ is defined as
$$\Delta^{\vee}=\{X\setminus F:\ F\notin \Delta\}.$$
For a subset $C\subseteq X$, by $x^C$ we mean the monomial $\prod_{x\in C} x$ in the ring $K[x_1, \dots, x_n]$.
One can see that
$(I_{\Delta})^{\vee}=(x^{F^c} \ : \ F\in \mathcal{F}(\Delta)),$
where $I_{\Delta}$ is the Stanley-Reisner ideal associated to $\Delta$ and $F^c=X\setminus F$.
Moreover, one can see that  $(I_{\Delta})^{\vee}=I_{\Delta^{\vee}}$.

The following theorem which was proved in \cite{T}, relates projective dimension and regularity of a
squarefree monomial ideal to its Alexander dual. It is one of our
tools in the study of the projective dimension and regularity of the ring $R/I_{\Delta}$.

\begin{thm}(See \cite[Theorem 2.1]{T}.) \label{1.3}
Let $I$ be a squarefree monomial ideal. Then
$\T{pd}(I^{\vee})=\T{reg}(R/I)$.
\end{thm}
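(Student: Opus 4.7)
The plan is to combine Hochster's formula for graded Betti numbers with combinatorial Alexander duality. Writing $I = I_{\Delta}$, $I^{\vee} = I_{\Delta^{\vee}}$, and $R = K[x_1,\dots,x_n]$, Hochster's formula gives
\[
\beta_{i,j}(I^{\vee}) \;=\; \beta_{i+1,j}(R/I_{\Delta^{\vee}}) \;=\; \sum_{W \subseteq V,\, |W|=j} \T{dim}_K \tilde{H}_{j-i-2}(\Delta^{\vee}|_W;K).
\]
The goal is to rewrite the right-hand side so that it depends on $\Delta$ rather than $\Delta^{\vee}$, in order to match it against the analogous local-cohomology description of $\T{reg}(R/I_{\Delta})$.

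The first ingredient is the purely combinatorial identity $(\Delta^{\vee}|_W)^{\vee} = \T{lk}_{\Delta}(V \setminus W)$, with the outer Alexander dual taken on the vertex set $W$. Unfolding definitions, a subset $G \subseteq W$ lies in $(\Delta^{\vee}|_W)^{\vee}$ precisely when $W \setminus G \notin \Delta^{\vee}$, equivalently when $(V \setminus W) \cup G \in \Delta$, which is exactly membership in $\T{lk}_{\Delta}(V \setminus W)$. Applying combinatorial Alexander duality $\tilde{H}_k(\Gamma;K) \cong \tilde{H}_{|W|-k-3}(\Gamma^{\vee};K)$ (over a field) to $\Gamma = \Delta^{\vee}|_W$ with $k = j-i-2$ and $|W|=j$ converts each summand into $\tilde{H}_{i-1}(\T{lk}_{\Delta}(V \setminus W); K)$. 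Setting $U = V \setminus W$ and taking the maximum over $i$, one obtains
\[
\T{pd}(I^{\vee}) \;=\; \max\bigl\{\, i : \tilde{H}_{i-1}(\T{lk}_{\Delta}U;K) \neq 0 \text{ for some } U \subseteq V \,\bigr\}.
\]

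For the regularity side, I would invoke the local-cohomology characterization $\T{reg}(M) = \max\{\ell+m : H^{\ell}_{\fm}(M)_m \neq 0\}$ together with the link version of Hochster's formula for Stanley--Reisner rings: for $a \in \mathbb{Z}_{\geq 0}^n$ with $F = \supp(a) \in \Delta$ one has $H^{\ell}_{\fm}(R/I_{\Delta})_{-a} \cong \tilde{H}_{\ell-|F|-1}(\T{lk}_{\Delta}F;K)$ (again using field coefficients), and zero otherwise. For fixed $F$ and $\ell$, the quantity $\ell - |a|$ attains its maximum $\ell - |F|$ when $a$ is the indicator vector of $F$, so
\[
\T{reg}(R/I_{\Delta}) \;=\; \max\bigl\{\, d : \tilde{H}_{d-1}(\T{lk}_{\Delta}F;K) \neq 0 \text{ for some } F \in \Delta \,\bigr\}.
\]
Because $\T{lk}_{\Delta}U$ is the void complex (with vanishing reduced homology) whenever $U \notin \Delta$, the two maxima range over the same effective data and must coincide, whence $\T{pd}(I^{\vee}) = \T{reg}(R/I_{\Delta})$. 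The main obstacle is the careful bookkeeping of index shifts through Alexander duality and both the Tor and local-cohomology variants of Hochster's formula; the identity $(\Delta^{\vee}|_W)^{\vee} = \T{lk}_{\Delta}(V \setminus W)$ is the sole genuinely new combinatorial input, with the rest being standard.
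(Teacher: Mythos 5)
Your argument is correct; note that the paper does not prove this statement at all but simply quotes it from Terai, and what you have written is essentially Terai's original proof (Hochster's formula for the Betti numbers of $I_{\Delta^{\vee}}$, the identity $(\Delta^{\vee}|_W)^{\vee}=\T{lk}_{\Delta}(V\setminus W)$, combinatorial Alexander duality, and the local-cohomology version of Hochster's formula for $\T{reg}(R/I_{\Delta})$), with all index shifts handled correctly. The only remark worth adding is that the degenerate cases (void versus irrelevant complex, and links of non-faces) are exactly where such proofs usually slip, and you have addressed them explicitly.
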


\section{Expansions of a simplicial complex and their algebraic properties}
In this section, expansions of a simplicial complex and their Stanley-Reisner rings are studied. The main goal is to explore
how the combinatorial and algebraic properties of a simplicial complex $\Delta$ and its Stanley-Reisner ring affects on the expansions.
\begin{defn}\label{2.1}
{\rm
Let $\Delta=\langle F_1,\ldots,F_m\rangle$ be a simplicial complex  with the vertex set $V(\Delta)=\{x_1,\ldots,x_n\}$ and $s_1,\ldots,s_n\in \mathbb{N}$ be arbitrary integers.
For any  $F_i=\{x_{i_1},\ldots,x_{i_{k_i}}\}\in \mathcal{F}(\Delta)$, where $1\leq i_1<\cdots<i_{k_i}\leq n$ and any $1\leq r_1\leq s_{i_1},\ldots, 1\leq r_{k_i}\leq s_{i_{k_i}}$, set
$$F_i^{r_1,\ldots, r_{k_i}}=\{x_{i_1r_1},\ldots,x_{i_{k_i}r_{k_i}}\}.$$
 We define the
$(s_1,\ldots,s_n)$-expansion of $\Delta$ to be a simplicial complex with the vertex set $\{\{x_{11},\ldots,x_{1s_1},x_{21},\ldots,x_{2s_2},\ldots,x_{n1},\ldots,x_{ns_n}\}$ and the facets $$\{x_{i_1r_1},\ldots,x_{i_{k_i}r_{k_i}}\} \ :\ \{x_{i_1},\ldots,x_{i_{k_i}}\}\in \mathcal{F}(\Delta), \ (r_1,\ldots,r_{k_i})\in [s_{i_1}]\times \cdots \times [s_{i_{k_i}}]\}.$$
We denote this simplicial complex by $\Delta^{(s_1,\ldots,s_n)}$}.
\end{defn}

\begin{exam}
{\rm Consider the simplicial complex $\Delta=\langle\{x_1,x_2,x_3\},\{x_1,x_2,x_4\},\{x_4,x_5\}\rangle$ depicted in Figure $1$. Then $$\Delta^{(1,2,1,1,2)}=\langle\{x_{11},x_{21},x_{31}\},\{x_{11},x_{22},x_{31}\},\{x_{11},x_{21},x_{41}\},\{x_{11},x_{22},
x_{41}\},\{x_{41},x_{51}\},\{x_{41},x_{52}\}\rangle.$$
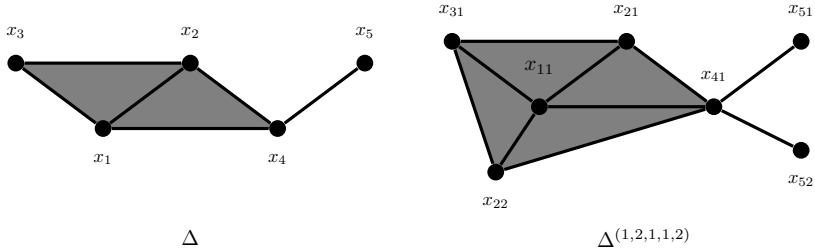
\begin{figure}[h2!]
\begin{center}
\begin{tikzpicture} [scale = .58, very thick = 10mm]

\path[fill,color=gray]
(-24,1.5) -- (-22,3) -- (-20,1.5);
\path[fill,color=gray]
(-24,1.5) -- (-22,3) -- (-26,3);

  \node (n4) at (-20,1.5)  [Cblack] {$14$};
\node () at (-20,.8)  [CW] {$x_4$};
\node (n3) at (-26,3)  [Cblack] {$13$};
\node () at (-26,3.7)  [CW] {$x_3$};
\node (n1) at (-24,1.5)  [Cblack] {$11$};
\node () at (-24,.8)  [CW] {$x_1$};

\node (n2) at (-22,3)  [Cblack] {$12$};
\node () at (-22,3.7)  [CW] {$x_2$};
\node (n5) at (-18,3)  [Cblack] {$15$};
\node () at (-18,3.7)  [CW] {$x_5$};
\node (m) at (-22,-1)  [Cwhite] {$\Delta$};

  \foreach \from/\to in {n1/n4,n1/n2,n2/n4,n4/n5,n1/n3,n2/n3}
    \draw[] (\from) -- (\to);

\path[fill,color=gray]
(-14,2) -- (-12,3.5) -- (-10,2);
\path[fill,color=gray]
(-14,2) -- (-12,3.5) -- (-16,3.5);
\path[fill,color=gray]
(-14,2) -- (-15,.5) -- (-16,3.5);
\path[fill,color=gray]
(-14,2) -- (-15,.5) -- (-10,2);

  \node (n14) at (-10,2)  [Cblack] {$41$};
\node () at (-10,2.7)  [CW] {$x_{41}$};

\node (n13) at (-16,3.5)  [Cblack] {$31$};
\node () at (-16,4.2)  [CW] {$x_{31}$};

\node (n22) at (-15,.5)  [Cblack] {$22$};
\node () at (-15,-.2)  [CW] {$x_{22}$};

\node (n11) at (-14,2)  [Cblack] {$11$};
\node () at (-14,2.9)  [Cg] {$x_{11}$};

\node (n12) at (-12,3.5)  [Cblack] {$21$};
\node () at (-12,4.2)  [CW] {$x_{21}$};

\node (n15) at (-8,3.5)  [Cblack] {$51$};
\node () at (-8,4.2)  [CW] {$x_{51}$};

\node (n25) at (-8,1)  [Cblack] {$52$};
\node () at (-8,.3)  [CW] {$x_{52}$};

\node (m) at (-11.6,-1)  [Cwhite] {$\Delta^{(1,2,1,1,2)}$};

  \foreach \from/\to in {n11/n14,n11/n12,n12/n14,n14/n15,n11/n13,n12/n13,n11/n22,n22/n13,n22/n14,n14/n25}
    \draw[] (\from) -- (\to);
\end{tikzpicture}
\label{fig:graph}
\caption{The simplicial complex $\Delta$ and the $(1,2,1,1,2)$-expansion of $\Delta$}\label{Fig1}
\end{center}
\end{figure}
}
\end{exam}

The following definition, gives an analogous concept for the expansion of a hypergraph, which is also a generalization of \cite[Definition 4.2]{FHV}.

\begin{defn}\label{2.3}
{\rm
For a hypergraph $\mathcal{H}$ with the vertex set $V(\mathcal{H})=\{x_1,\ldots,x_n\}$ and the edge set $\mathcal{E}(\mathcal{H})$, we define the
$(s_1,\ldots,s_n)$-expansion of $\mathcal{H}$ to be a hypergraph with the vertex set $\{x_{11},\ldots,x_{1s_1},x_{21},\ldots,x_{2s_2},\ldots,x_{n1},\ldots,x_{ns_n}\}$ and the edge set
\begin{align*}
\{\{x_{i_1r_1},\ldots, x_{i_tr_t}\}:\  \{x_{i_1},\ldots, x_{i_t}\}\in \mathcal{E}(\mathcal{H}),\
(r_1,\ldots,r_{t})\in [s_{i_1}]\times \cdots \times [s_{i_{t}}]\}\cup\\
 \{\{x_{ij},x_{ik}\}: \ 1\leq i\leq n, \ j\neq k\}.
\end{align*}

We denote this hypergraph by $\mathcal{H}^{(s_1,\ldots,s_n)}$.
}
\end{defn}

\begin{rem}
{\rm From Definitions \ref{2.1} and \ref{2.3} one can see that for a hypergraph $\mathcal{H}$ and integers $s_1,\ldots,s_n\in \mathbb{N}$,
$\Delta_{\mathcal{H}^{(s_1,\ldots,s_n)}}=\Delta_{\mathcal{H}}^{(s_1,\ldots,s_n)}.$
Thus the expansion of a simplicial complex is the natural generalization of the concept of expansion in graph theory.
}
\end{rem}

\begin{exam}
{\rm
 Let $G$ be the following graph.
\begin{figure}[h3!]
\label{fig5} \begin{center}
\begin{tikzpicture} [scale = .48, very thick = 10mm]
  \node (n4) at (-23,5)  [Cblack] {$4$};
\node (m7) at (-23,4.3)  [CW] {$x_3$};
\node (n3) at (-26,7)  [Cblack] {$3$};
\node (m) at (-26,7.7)  [CW] {$x_1$};
\node (n1) at (-26,5)  [Cblack] {$1$};
\node (m1) at (-26,4.3)  [CW] {$x_4$};
\node (n2) at (-23,7)  [Cblack] {$2$};
\node (m2) at (-23,7.7)  [CW] {$x_2$};
\node (n5) at (-21,6)  [Cblack] {$5$};
\node (m5) at (-21,5.3)  [CW] {$x_5$};
\foreach \from/\to in {n1/n4,n1/n2,n2/n4,n1/n3,n2/n3,n5/n2}
    \draw[] (\from) -- (\to);

\end{tikzpicture}
\label{fig:graph}
\end{center}
\end{figure}

The graph $G^{(1,1,2,1,2)}$ and the independence complexes $\Delta_G$ and $\Delta_{G^{(1,1,2,1,2)}}$ are
shown in Figure $2$.
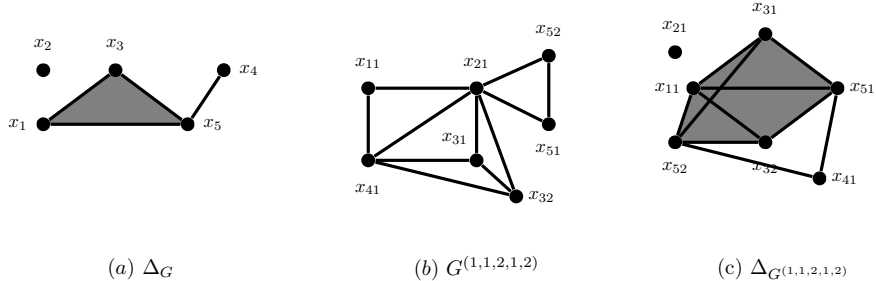
\begin{figure}[h3!]
\label{fig6} \begin{center}
\begin{tikzpicture} [scale = .48, very thick = 10mm]
  \node (n4) at (-23,5)  [Cblack] {$4$};
\node (m7) at (-23.6,5.6)  [CW] {$x_{31}$};

\node (m8) at (-21.2,4)  [CW] {$x_{32}$};
\node (m9) at (-21.9,4)  [Cblack] {$3$};

\node (n3) at (-26,7)  [Cblack] {$3$};
\node (m) at (-26,7.7)  [CW] {$x_{11}$};

\node (n1) at (-26,5)  [Cblack] {$1$};
\node (m1) at (-26,4.2)  [CW] {$x_{41}$};

\node (n2) at (-23,7)  [Cblack] {$2$};
\node (m2) at (-23,7.7)  [CW] {$x_{21}$};

\node (n5) at (-21,6)  [Cblack] {$5$};
\node (m5) at (-21,5.3)  [CW] {$x_{51}$};

\node (n52) at (-21,7.9)  [Cblack] {$5$};
\node (m52) at (-21,8.6)  [CW] {$x_{52}$};
\foreach \from/\to in {n1/n4,n1/n2,n2/n4,n1/n3,n2/n3,n5/n2,n52/n2,n4/m9,m9/n2,m9/n1,n5/n52}
    \draw[] (\from) -- (\to);
\node (m) at (-23,2)  [Cwhite] {$(b)\ G^{(1,1,2,1,2)}$};

\path[fill,color=gray]
(-35,6) -- (-33,7.5) -- (-31,6);

  \node (p5) at (-31,6)  [Cblack] {$5$};
 \node (p7) at (-30.3,6)  [CW] {$x_5$};

 \node (p41) at (-30,7.5)  [Cblack] {$4$};
 \node (p42) at (-29.3,7.5)  [CW] {$x_4$};

\node (p3) at (-35,6)  [Cblack] {$1$};
\node (p10) at (-35.7,6)  [CW] {$x_1$};

\node (p4) at (-33,7.5)  [Cblack] {$3$};
\node (p10) at (-33,8.2)  [CW] {$x_3$};

\node (p2) at (-35,7.5)  [Cblack] {$2$};
\node (p22) at (-35,8.2)  [CW] {$x_2$};
\node (p) at (-32.3,2)  [Cwhite] {$(a)\ \Delta_G$};

  \foreach \from/\to in {p3/p5,p3/p4,p4/p5,p41/p5}
    \draw[] (\from) -- (\to);
\path[fill,color=gray]
(-17,7) -- (-15,8.5) -- (-13,7);
\path[fill,color=gray]
(-17,7) -- (-13,7) -- (-15,5.5);
\path[fill,color=gray]
(-17.5,5.5) -- (-17,7) -- (-15,8.5);
\path[fill,color=gray]
(-17.5,5.5) -- (-17,7) -- (-15,5.5);

  \node (q5) at (-13,7)  [Cblack] {$5$};
 \node () at (-12.3,7)  [CW] {$x_{51}$};
\node (q1) at (-17.5,5.5)  [Cblack] {$5$};
\node () at (-17.5,4.8)  [CW] {$x_{52}$};

\node (q3) at (-17,7)  [Cblack] {$1$};
\node () at (-17.7,7)  [CW] {$x_{11}$};

\node (q4) at (-15,8.5)  [Cblack] {$3$};
\node () at (-15,9.2)  [CW] {$x_{31}$};

\node (q9) at (-15,5.5)  [Cblack] {$3$};
\node () at (-15,4.8)  [CW] {$x_{32}$};

\node (q14) at (-13.5,4.5)  [Cblack] {$4$};
\node () at (-12.8,4.5)  [CW] {$x_{41}$};

\node (p2) at (-17.5,8)  [Cblack] {$2$};
\node (p22) at (-17.5,8.7)  [CW] {$x_{21}$};
\node (q) at (-14.5,2)  [Cwhite] {(c)\ $\Delta_{G^{(1,1,2,1,2)}}$};

  \foreach \from/\to in {q3/q5,q3/q4,q4/q5,q3/q9,q5/q9,q1/q3,q1/q9,q1/q4,q1/q14,q5/q14}
    \draw[] (\from) -- (\to);

\end{tikzpicture}
\label{fig:graph2}
\caption{The graph $G^{(1,1,2,1,2)}$ and simplicial complexes $\Delta_G$ and $\Delta_{G^{(1,1,2,1,2)}}$} 
\end{center}
\end{figure}
}
\end{exam}

In the following proposition, it is shown that a graph is chordal if and only if some of its expansions is chordal.
\begin{prop}\label{cl}
For any $s_1,\ldots,s_n\in \mathbb{N}$, $G$ is a chordal graph if and only if $G^{(s_1,\ldots,s_n)}$ is chordal.
\end{prop}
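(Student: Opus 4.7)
The plan is to exploit the explicit combinatorial structure of the expansion. Unpacking Definition \ref{2.3} for a graph, each vertex $x_i$ of $G$ is replaced by a clique $C_i$ on $\{x_{i1},\ldots,x_{is_i}\}$, and each edge $\{x_i,x_j\}$ of $G$ is replaced by the complete bipartite join between $C_i$ and $C_j$; in particular, $x_{ir}$ and $x_{jr'}$ are adjacent in $G^{(s_1,\ldots,s_n)}$ if and only if either $i=j$, or $i\neq j$ and $\{x_i,x_j\}\in E(G)$. I will prove each direction by ruling out induced cycles of length $\geq 4$.

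For the ($\Leftarrow$) direction, suppose $G$ contains an induced cycle $x_{i_1},x_{i_2},\ldots,x_{i_t},x_{i_1}$ with $t\geq 4$. Consider the corresponding sequence $x_{i_1,1},x_{i_2,1},\ldots,x_{i_t,1},x_{i_1,1}$ in $G^{(s_1,\ldots,s_n)}$. Consecutive vertices are adjacent by the bipartite-join description, so this is a cycle. Since the $x_{i_a}$ are distinct, the only possible chords would have to come from edges of $G$ joining non-consecutive $x_{i_a}$ and $x_{i_b}$, which do not exist by assumption. Hence this cycle is induced, contradicting chordality of $G^{(s_1,\ldots,s_n)}$.

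For the ($\Rightarrow$) direction, suppose $G^{(s_1,\ldots,s_n)}$ contains an induced cycle $v_1,v_2,\ldots,v_t,v_1$ with $t\geq 4$, where $v_a=x_{i_a,r_a}$. The key step is to show that no two consecutive vertices share a first index. If $v_a$ and $v_{a+1}$ had the same first index $i$, then $v_{a+2}=x_{i_{a+2},r_{a+2}}$ is adjacent to $v_{a+1}$ but not to $v_a$; however, if $i_{a+2}=i$ then $v_{a+2}$ lies in the clique $C_i$ and is adjacent to $v_a$, while if $i_{a+2}\neq i$ then by the bipartite-join description the existence of the edge $v_{a+1}v_{a+2}$ forces $\{x_i,x_{i_{a+2}}\}\in E(G)$ and hence the edge $v_av_{a+2}$ as well; either way we get a chord, which is a contradiction. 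So all first indices on the cycle are pairwise distinct (if non-consecutive $v_a,v_b$ shared a first index, they would already be adjacent inside a clique $C_i$, again a contradiction). Projecting $v_a\mapsto x_{i_a}$ yields a closed walk of length $t\geq 4$ in $G$ with distinct vertices, hence a cycle; and any chord $\{x_{i_a},x_{i_b}\}$ in $G$ would lift to the chord $v_av_b$ in the expansion, so this cycle is induced, contradicting chordality of $G$.

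The main obstacle is the first-index argument in the forward direction: one has to verify that consecutive vertices of the induced cycle cannot live in the same clique $C_i$, and for this the hypothesis $t\geq 4$ (which guarantees a well-defined next vertex $v_{a+2}$ distinct from $v_a$) is essential. Once this is established, the rest is a straightforward projection/lifting between $G$ and $G^{(s_1,\ldots,s_n)}$.
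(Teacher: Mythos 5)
Your proof is correct and takes essentially the same route as the paper's: the backward direction via $G$ being (isomorphic to) an induced subgraph of the expansion, and the forward direction by the same case analysis --- if two cycle vertices share a first index one finds a chord from the clique/join structure (looking one step further along the cycle when the shared pair is consecutive), and if all first indices are distinct one projects to a cycle of $G$ whose chord lifts. The paper phrases the forward direction as ``every cycle of length $\geq 4$ has a chord'' rather than deriving a contradiction from an induced cycle, but the content is identical.
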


\begin{proof}
If $G^{(s_1,\ldots,s_n)}$ is chordal, then clearly $G$ is also chordal, since it can be considered as an induced subgraph of
$G^{(s_1,\ldots,s_n)}$. Now, let $G$ be chordal, $V(G)=\{x_1,\ldots,x_n\}$ and consider a cycle $C_m: x_{i_1j_1},\ldots, x_{i_mj_m}$ in $G^{(s_1,\ldots,s_n)}$, where $m\geq 4$ and $1\leq j_k\leq s_{i_k}$ for all $1\leq k\leq m$. We consider two cases.

Case 1.  $i_k=i_\ell$ for some distinct integers $k$ and $\ell$ with $1\leq k<\ell\leq m$. Then by the definition of expansion,
$x_{i_kj_k}x_{i_\ell j_\ell}\in E(G^{(s_1,\ldots,s_n)})$. Thus if $x_{i_kj_k}x_{i_\ell j_\ell}$ is not an edge of $C_m$, then it is a chord in $C_m$.
Now, assume that $x_{i_kj_k}x_{i_\ell j_\ell}$ is an edge of $C_m$. Note that since
$x_{i_\ell j_\ell}x_{i_{\ell+1}j_{\ell+1}}\in E(C_m)$, either $i_\ell=i_{\ell+1}$ or $x_{i_\ell}x_{i_{\ell+1}}\in E(G)$ (if $\ell=m$, then set $\ell+1:=1$). Thus $x_{i_kj_k}x_{i_{\ell+1}j_{\ell+1}}\in E(G^{(s_1,\ldots,s_n)})$ is a chord in $C_m$.

Case 2. $i_k\neq i_\ell$ for any distinct integers $1\leq k,\ell\leq m$. By the definition of expansion, one can see that
$x_{i_1},\ldots, x_{i_m}$ forms a cycle of length $m$ in $G$. So it has a chord. Let $x_{i_k}x_{i_\ell}\in E(G)$ be a chord in this cycle. Then
$x_{i_kj_k}x_{i_\ell j_\ell}\in E(G^{(s_1,\ldots,s_n)})$ is a chord in $C_m$.
Thus $G^{(s_1,\ldots,s_n)}$ is also chordal.
\end{proof}

The following theorem illustrates that the vertex decomposability of a simplicial complex is equivalent to the vertex decomposability of its expansions.
\begin{thm}\label{evd}
Assume that $s_1, \dots, s_n$ are positive integers. Then $\Delta$ is vertex decomposable if and only if  $\Delta^{(s_1,\ldots,s_n)}$ is vertex decomposable.
\end{thm}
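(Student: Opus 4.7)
The plan is to proceed by strong induction on $\sigma := s_1 + \cdots + s_n$, handling both implications in tandem. The base case $\sigma = n$ is immediate, since then every $s_i = 1$ and $\Delta^{(s_1,\ldots,s_n)} = \Delta$. The first step of the inductive argument is to unwind Definition~\ref{2.1} and establish three natural identifications of simplicial complexes, writing $\Delta' := \Delta^{(s_1,\ldots,s_n)}$: (i) $\T{lk}_{\Delta'}(x_{ir}) \cong (\T{lk}_\Delta(x_i))^{(s_1,\ldots,\widehat{s_i},\ldots,s_n)}$ for every $i$ and every $r \in [s_i]$; (ii) $\T{del}_{\Delta'}(x_{is_i}) \cong \Delta^{(s_1,\ldots,s_i-1,\ldots,s_n)}$ when $s_i \ge 2$ (the case $r \ne s_i$ reducing to this one by relabeling the copies $x_{i1},\ldots,x_{is_i}$); and (iii) $\T{del}_{\Delta'}(x_{i1}) \cong (\T{del}_\Delta(x_i))^{(s_1,\ldots,\widehat{s_i},\ldots,s_n)}$ when $s_i = 1$. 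Each is verified by observing that every face of $\Delta'$ has the form $\{x_{jr_j} : x_j \in S\}$ for some face $S$ of $\Delta$, and by tracking how the link or deletion at $x_{ir}$ restricts the admissible pair $(S, (r_j))$.

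For the forward direction, assume $\Delta$ is vertex decomposable. If $\Delta$ is a simplex, take $v := x_{is_i}$ for some $i$ with $s_i \ge 2$ (if no such $i$ exists then $\Delta' = \Delta$ and we are done). Otherwise let $x_{i_0}$ be a shedding vertex of $\Delta$ and set $v := x_{i_0 s_{i_0}}$ if $s_{i_0} \ge 2$ and $v := x_{i_0 1}$ if $s_{i_0} = 1$. By (i)--(iii), both $\T{del}_{\Delta'}(v)$ and $\T{lk}_{\Delta'}(v)$ are expansions, with strictly smaller $\sigma$, of vertex decomposable simplicial complexes (namely $\Delta$ itself, or $\T{del}_\Delta(x_{i_0})$ and $\T{lk}_\Delta(x_{i_0})$ which are vertex decomposable because $x_{i_0}$ is a shedding vertex), so the inductive hypothesis yields vertex decomposability of each. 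The shedding condition on $v$ in $\Delta'$ is then routine: when $s_{i_0} \ge 2$, every facet of $\Delta^{(\ldots,s_{i_0}-1,\ldots)}$ is already a facet of $\Delta^{(s_1,\ldots,s_n)}$; when $s_{i_0} = 1$, every facet of $\T{del}_\Delta(x_{i_0})$ is a facet of $\Delta$ by the shedding property of $x_{i_0}$, and its expansion is therefore a facet of $\Delta'$.

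For the backward direction, if $\Delta'$ is a simplex then it has a unique facet, which by the definition of expansion forces $\Delta$ to have a unique facet, so $\Delta$ is itself a simplex. Otherwise let $v = x_{ij}$ be a shedding vertex of $\Delta'$. When $s_i \ge 2$, identification (ii) identifies $\T{del}_{\Delta'}(v)$ with $\Delta^{(s_1,\ldots,s_i-1,\ldots,s_n)}$, which is vertex decomposable by hypothesis, and the inductive hypothesis yields $\Delta$ vertex decomposable. When $s_i = 1$, identifications (i) and (iii) identify $\T{lk}_{\Delta'}(v)$ and $\T{del}_{\Delta'}(v)$ with expansions of $\T{lk}_\Delta(x_i)$ and $\T{del}_\Delta(x_i)$; the inductive hypothesis delivers vertex decomposability of both, and the shedding of $v$ in $\Delta'$ translates via (iii) into the shedding of $x_i$ in $\Delta$, because a facet of $\T{del}_\Delta(x_i)$ expands to a facet of $\Delta'$ precisely when it is already a facet of $\Delta$. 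The main obstacle is this last case $s_i = 1$, where the shedding condition must be transported from $\Delta'$ back to $\Delta$ through the third identification; with the formulas (i)--(iii) in hand, the verification is a direct book-keeping exercise.
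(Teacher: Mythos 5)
Your proof is correct and follows essentially the same route as the paper's: induction on the total number of vertices of the expansion, the same three link/deletion identifications, and the same case split on whether the multiplicity of the chosen vertex is $1$ or at least $2$. The only differences are cosmetic (you shed the last copy $x_{is_i}$ rather than the first, verify the shedding condition for the $s_i\ge 2$ case directly on the deletion instead of via the link, and run both implications in a single induction).
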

\begin{proof}
Assume that $\Delta$ is a simplicial complex with the vertex set $V(\Delta)=\{x_1,\dots, x_n\}$ and $s_1, \dots, s_n$ are positive integers. To prove the `only if' part, we use generalized induction on $|V(\Delta^{(s_1,\ldots,s_n)})|$ (note that $|V(\Delta^{(s_1,\ldots,s_n)})|\geq |V(\Delta)|$). If $|V(\Delta^{(s_1,\ldots,s_n)})|=|V(\Delta)|$, then $\Delta=\Delta^{(s_1, \dots, s_n)}$ and so there is nothing to prove in this case. Assume inductively that for all vertex decomposable simplicial complexes $\Delta'$ and all positive integers $s'_1, \dots, s'_n$ with $|V(\Delta'^{(s'_1,\dots,s'_n)})|< t$, $\Delta'^{(s'_1,\ldots,s'_n)}$ is vertex decomposable. Now, we are going to prove the result when $t=|V(\Delta^{(s_1,\ldots,s_n)})|>|V(\Delta)|$. Since $|V(\Delta^{(s_1,\ldots,s_n)})|>|V(\Delta)|$, there exists an integer $1\leq i\leq n$ such that $s_i>1$. If $\Delta=\langle F\rangle$ is a simplex, we claim that $x_{i1}$ is a shedding vertex of $\Delta^{(s_1,\ldots,s_n)}$. It can be easily checked that
$$\T{lk}_{\Delta^{(s_1,\ldots,s_n)}}(x_{i1})=\langle F\setminus \{x_i\}\rangle^{(s_1, \dots, s_{i-1},s_{i+1}, \dots, s_n)}$$
and
$$\T{del}_{\Delta^{(s_1,\ldots,s_n)}}(x_{i1})=\Delta^{(s_1, \dots, s_{i-1}, s_i-1, s_{i+1}, \dots, s_n)}.$$
So, inductive hypothesis ensures that $\T{lk}_{\Delta^{(s_1,\ldots,s_n)}}(x_{i1})$ and $\T{del}_{\Delta^{(s_1,\ldots,s_n)}}(x_{i1})$ are vertex decomposable. Also, it can be seen that every facet of $\T{del}_{\Delta^{(s_1,\ldots,s_n)}}(x_{i1})$ is a facet of $\Delta^{(s_1,\ldots,s_n)}$. This shows that $\Delta^{(s_1,\ldots,s_n)}$ is vertex decomposable in this case. Now, if $\Delta$ is not a simplex, it has a shedding vertex, say $x_1$. We claim that $x_{11}$ is a shedding vertex of $\Delta^{(s_1,\ldots,s_n)}$. To this end, it can be seen that
$$\T{lk}_{\Delta^{(s_1,\ldots,s_n)}}(x_{11})={\T{lk}_\Delta (x_1)}^{(s_2,\ldots,s_n)}$$
and
$$\T{del}_{\Delta^{(s_1,\ldots,s_n)}}(x_{11})=\left\lbrace
\begin{array}{c l}
\Delta^{(s_1-1, s_2,\ldots,s_n)}    & \text{if $s_1>1$;}\\
 {\T{del}_\Delta (x_1)}^{(s_2,\ldots,s_n)} & \text{if $s_1=1$.}
\end{array}
\right.$$
Hence,  inductive hypothesis deduces that $\T{lk}_{\Delta^{(s_1,\ldots,s_n)}}(x_{11})$ and $\T{del}_{\Delta^{(s_1,\ldots,s_n)}}(x_{11})$ are vertex decomposable simplicial complexes. Now, suppose that $F^{j_1, \dots, j_k}=\{x_{i_1j_1}, \dots, x_{i_kj_k} \}$ is a facet of $\T{lk}_{\Delta^{(s_1,\ldots,s_n)}}(x_{11})$, where $F=\{x_{i_1}, \dots, x_{i_k}\}$ is a face of $\Delta$. Then since $\T{lk}_{\Delta^{(s_1,\ldots,s_n)}}(x_{11})={\T{lk}_\Delta (x_1)}^{(s_2,\ldots,s_n)}$,
$F$ is a facet of $\T{lk}_\Delta (x_1)$. So, there is a vertex $x_{i_{k+1}}\in V(\Delta)$ such that $\{x_{i_1}, \dots,x_{i_k}, x_{i_{k+1}}\}$ is a face of $\T{del}_\Delta (x_1)$ (see Remark \ref{remark1}). Hence $\{x_{i_1j_1}, \dots, x_{i_kj_k}, x_{i_{k+1}1} \}$ is a face of $\T{del}_{\Delta^{(s_1,\ldots,s_n)}}(x_{11})$. This completes the proof of the first part.

To prove the `if' part, we also use generalized induction on $|V(\Delta^{(s_1,\ldots,s_n)})|$. If $|V(\Delta^{(s_1,\ldots,s_n)})|=|V(\Delta)|$, then $\Delta=\Delta^{(s_1, \dots, s_n)}$ and so there is nothing to prove in this case. Assume inductively that for all simplicial complexes $\Delta'$ and all positive integers $s'_1, \dots, s'_n$ with $|V(\Delta'^{(s'_1,\dots,s'_n)})|< t$ such that $\Delta'^{(s'_1,\dots,s'_n)}$ is vertex decomposable, we have proved that $\Delta'$ is also vertex decomposable. Now, we are going to prove the result when $t=|V(\Delta^{(s_1,\ldots,s_n)})|>|V(\Delta)|$. Now, since $|V(\Delta^{(s_1,\ldots,s_n)})|>|V(\Delta)|$ and  $\Delta^{(s_1,\ldots,s_n)}$ is vertex decomposable, it has a shedding vertex, say $x_{11}$. If $s_1>1$, then
$$\T{del}_{\Delta^{(s_1,\ldots,s_n)}}(x_{11})=\Delta^{(s_1-1, s_2,\ldots,s_n)},$$
and the inductive hypothesis ensures that $\Delta$ is vertex decomposable as desired. Else, we should have $s_1=1$,
$$\T{lk}_{\Delta^{(s_1,\ldots,s_n)}}(x_{11})={\T{lk}_\Delta (x_1)}^{(s_2,\ldots,s_n)}$$
and
$$\T{del}_{\Delta^{(s_1,\ldots,s_n)}}(x_{11})={\T{del}_\Delta (x_1)}^{(s_2,\ldots,s_n)}.$$
So, inductive hypothesis implies that $\T{lk}_\Delta (x_1)$ and $\T{del}_\Delta (x_1)$ are vertex decomposable simplicial complexes.  Now, assume that $F=\{x_{i_1}, \dots, x_{i_k}\}$ is a facet of $\T{del}_\Delta (x_1)$. Then $\{x_{i_11}, \dots, x_{i_k1}\}$ is a facet of $\T{del}_{\Delta^{(s_1,\ldots,s_n)}}(x_{11})$. Since $x_{11}$ is a shedding vertex of $\Delta^{(s_1,\ldots,s_n)}$, $\{x_{i_11}, \dots, x_{i_k1}\}$ is a facet of $\Delta^{(s_1,\ldots,s_n)}$. Hence, $F$ is a facet of $\Delta$ and the proof is complete.
\end{proof}

\begin{rem}\label{pure}
{\rm By the notations as in Definition \ref{2.1}, $\Delta$ is pure if and only if $\Delta^{(s_1,\ldots,s_n)}$ is pure, since any facet $F_i^{r_1,\ldots, r_{k_i}}$ of $\Delta^{(s_1,\ldots,s_n)}$ has the same cardinality as $F_i$.}
\end{rem}

The following theorem together with Theorem \ref{evd} help us to see how the Cohen-Macaulayness propery in a vertex decomposable simplicial complex and its expansions are related.
\begin{thm}\label{vc}
A vertex decomposable simplicial complex $\Delta$ is Cohen-Macaulay if and only if $\Delta$ is pure.
\end{thm}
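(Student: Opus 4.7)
The plan is to prove the two directions separately. The forward implication is standard commutative algebra: if $R/I_\Delta$ is Cohen--Macaulay, then it is equidimensional, so all minimal primes of $I_\Delta$ have the same height, which translates to all facets of $\Delta$ having the same cardinality. So I would dispatch this direction in one line.

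For the nontrivial ``if'' direction, I would proceed by induction on $|V(\Delta)|$. The base case is a simplex, which is trivially Cohen--Macaulay. For the inductive step, pick a shedding vertex $x$ of $\Delta$. By Definition \ref{1.1}, both $\T{lk}_\Delta(x)$ and $\T{del}_\Delta(x)$ are vertex decomposable, so the strategy is to verify that each of them is \emph{pure} and then invoke the inductive hypothesis. For $\T{lk}_\Delta(x)$, every facet has the form $F\setminus\{x\}$ for some facet $F$ of $\Delta$ containing $x$; since $\Delta$ is pure of some dimension $d$, each such $F\setminus\{x\}$ has dimension $d-1$, so the link is pure. For $\T{del}_\Delta(x)$, the shedding condition (ii) says every facet of $\T{del}_\Delta(x)$ is already a facet of $\Delta$, and hence has dimension $d$; so the deletion is pure of dimension $d$ as well. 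By induction, both $\T{lk}_\Delta(x)$ and $\T{del}_\Delta(x)$ are Cohen--Macaulay.

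To pass from Cohen--Macaulayness of $\T{lk}_\Delta(x)$ and $\T{del}_\Delta(x)$ to that of $\Delta$, I would use Reisner's criterion: $\Delta$ is Cohen--Macaulay over $K$ iff $\widetilde{H}_i(\T{lk}_\Delta(F);K)=0$ for every face $F\in\Delta$ and every $i<\dim\T{lk}_\Delta(F)$. For faces $F$ with $x\in F$ the link in $\Delta$ equals the link in $\T{lk}_\Delta(x)$ of $F\setminus\{x\}$, so the required vanishing is already known. For faces with $x\notin F$, one has $\T{lk}_\Delta(F)=\T{lk}_{\T{del}_\Delta(x)}(F)$ except possibly when $F\cup\{x\}\in\Delta$, and in that remaining situation one applies the Mayer--Vietoris sequence
\[
\cdots\to \widetilde{H}_i(\T{lk}_\Delta(F))\to \widetilde{H}_i(\T{lk}_{\T{del}_\Delta(x)}(F))\oplus \widetilde{H}_i(\T{lk}_\Delta(F\cup\{x\}))\to \widetilde{H}_{i-1}(\T{lk}_\Delta(F\cup\{x\}))\to\cdots
\]
using that purity of $\T{lk}_\Delta(x)$ and $\T{del}_\Delta(x)$ (established above) guarantees that $\T{lk}_\Delta(F)$ and $\T{lk}_{\T{del}_\Delta(x)}(F)$ have the same dimension, so the inductive vanishing on both pieces forces vanishing on $\Delta$.

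The main obstacle I anticipate is the bookkeeping in the final homological step, particularly the dimension-counting needed to make the Mayer--Vietoris argument yield vanishing in the right range; a cleaner alternative would be to invoke the theorem of Bj\"{o}rner--Wachs that pure vertex decomposable complexes are (pure) shellable, combined with the classical fact that pure shellable complexes are Cohen--Macaulay, which short-circuits the Reisner computation entirely.
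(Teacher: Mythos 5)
Your proposal is correct, but it takes a genuinely different (and much longer) route than the paper: the paper's entire proof of Theorem \ref{vc} is a pair of citations --- purity plus vertex decomposability gives pure shellability by Bj\"{o}rner--Wachs \cite[Theorem 11.3]{BW2}, and pure shellable complexes are Cohen--Macaulay by \cite[Theorem 5.3.18]{VIL} --- together with the standard equidimensionality observation for the ``only if'' direction, exactly as you give it. In other words, the ``cleaner alternative'' in your last sentence \emph{is} the paper's proof. Your primary argument (induction on $|V(\Delta)|$ via a shedding vertex, purity of the link and the deletion, and Reisner's criterion) does go through and has the merit of being self-contained modulo Reisner, but two points deserve care. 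First, your displayed Mayer--Vietoris sequence is miswritten: for the cover $\T{lk}_\Delta(F)=\T{lk}_{\T{del}_\Delta(x)}(F)\cup \T{star}_{\T{lk}_\Delta(F)}(x)$ with intersection $\T{lk}_\Delta(F\cup\{x\})$, and with the star acyclic (it is a cone with apex $x$), the relevant exact fragment is
\[
\widetilde{H}_i\bigl(\T{lk}_{\T{del}_\Delta(x)}(F)\bigr)\longrightarrow \widetilde{H}_i\bigl(\T{lk}_\Delta(F)\bigr)\longrightarrow \widetilde{H}_{i-1}\bigl(\T{lk}_\Delta(F\cup\{x\})\bigr),
\]
whereas in your version the arrows point the wrong way and the intersection term sits in the wrong slot. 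Second, the dimension bookkeeping you worry about does close up: purity of $\T{del}_\Delta(x)$ (this is precisely where shedding condition (ii) enters) gives $\dim \T{lk}_{\T{del}_\Delta(x)}(F)=\dim \T{lk}_\Delta(F)$, and $\dim \T{lk}_\Delta(F\cup\{x\})=\dim \T{lk}_\Delta(F)-1$, so both outer terms vanish for $i<\dim \T{lk}_\Delta(F)$ by the inductive hypothesis applied to $\T{del}_\Delta(x)$ and $\T{lk}_\Delta(x)$. What your longer route buys is independence from the shellability literature; what the paper's route buys is brevity.
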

\begin{proof}
See \cite[Theorem 11.3]{BW2} and \cite[Theorem 5.3.18]{VIL}.
\end{proof}

\begin{cor}\label{cor2}
Let $\Delta$ be a vertex decomposable simplicial complex and $s_1, \dots, s_n$ be positive integers. Then $\Delta$ is Cohen-Macaulay if and only if $\Delta^{(s_1,\ldots,s_n)}$ is Cohen-Macaulay.
\end{cor}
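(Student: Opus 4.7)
The plan is to chain together the three results that immediately precede this corollary. Since $\Delta$ is assumed to be vertex decomposable, Theorem \ref{evd} gives that $\Delta^{(s_1,\ldots,s_n)}$ is also vertex decomposable. This places both $\Delta$ and $\Delta^{(s_1,\ldots,s_n)}$ in the class where the criterion of Theorem \ref{vc} applies: for each of them, Cohen-Macaulayness is equivalent to purity.

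Next, I would invoke Remark \ref{pure}, which states that $\Delta$ is pure if and only if $\Delta^{(s_1,\ldots,s_n)}$ is pure, since expansion replaces every facet $F_i$ by facets of the form $F_i^{r_1,\ldots,r_{k_i}}$ of the same cardinality. Concatenating the three equivalences gives
\[
\Delta \text{ is CM} \iff \Delta \text{ is pure} \iff \Delta^{(s_1,\ldots,s_n)} \text{ is pure} \iff \Delta^{(s_1,\ldots,s_n)} \text{ is CM},
\]
which is exactly the desired statement.

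There is essentially no obstacle here, since every ingredient has already been established: the preservation of vertex decomposability under expansion (Theorem \ref{evd}) is the substantive part, and the equivalence of Cohen-Macaulayness with purity for vertex decomposable complexes (Theorem \ref{vc}) is cited from the literature. The only care needed is to make sure the hypothesis "vertex decomposable" is used to legitimately apply Theorem \ref{vc} on \emph{both} sides of the equivalence; this is where Theorem \ref{evd} is essential, because without it one could not conclude that $\Delta^{(s_1,\ldots,s_n)}$ satisfies the hypothesis of Theorem \ref{vc}. The proof should therefore be a short two- or three-line argument citing Theorem \ref{evd}, Remark \ref{pure}, and Theorem \ref{vc} in that order.
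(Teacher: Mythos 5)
Your argument is correct and is essentially identical to the paper's own proof, which likewise applies Theorem \ref{evd} to transfer vertex decomposability to the expansion, then uses Theorem \ref{vc} on both complexes and Remark \ref{pure} to identify the two purity conditions. No gaps.
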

\begin{proof}
By Theorem \ref{evd}, $\Delta^{(s_1,\ldots,s_n)}$ is also vertex decomposable. Also, by Theorem \ref{vc},  $\Delta$, respectively $\Delta^{(s_1,\ldots,s_n)}$, is Cohen-Macaulay if and only if $\Delta$, respectively $\Delta^{(s_1,\ldots,s_n)}$, is pure. Now, by Remark \ref{pure}, the result is clear.
\end{proof}

\begin{cor}\label{cor3}
Let $G$ be a Cohen-Macaulay chordal graph or a Cohen-Macaulay bipartite graph. Then $G^{(s_1,\ldots,s_n)}$ is Cohen-Macaulay.
\end{cor}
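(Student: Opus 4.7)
My plan is to reduce the statement to Corollary \ref{cor2} by showing, in each of the two cases, that the independence complex $\Delta_G$ is vertex decomposable. Once vertex decomposability of $\Delta_G$ is established, the hypothesis that $G$ is Cohen--Macaulay translates (by definition) to $\Delta_G$ being a Cohen--Macaulay simplicial complex, and Corollary \ref{cor2} then yields that $\Delta_G^{(s_1,\ldots,s_n)}$ is Cohen--Macaulay. Using the identity $\Delta_{G^{(s_1,\ldots,s_n)}}=\Delta_G^{(s_1,\ldots,s_n)}$ (from the remark following Definition \ref{2.3}), this is exactly the assertion that $G^{(s_1,\ldots,s_n)}$ is Cohen--Macaulay.

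For the chordal case, I would invoke Woodroofe's theorem that the independence complex of any chordal graph is vertex decomposable (\cite{W} or the analogous source in the paper's bibliography). This does not require the Cohen--Macaulay hypothesis at all; the Cohen--Macaulay hypothesis on $G$ is used only to guarantee that $\Delta_G$ is Cohen--Macaulay so that Corollary \ref{cor2} applies. For the bipartite case, I would invoke the result of Van Tuyl and Villarreal stating that every Cohen--Macaulay bipartite graph has a vertex decomposable independence complex. These two facts together cover both alternatives in the hypothesis.

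Having reduced to the two citations, the argument is straightforward: in each case $\Delta_G$ is vertex decomposable and pure (since Cohen--Macaulay complexes are pure), and Corollary \ref{cor2} directly delivers the Cohen--Macaulayness of the expansion. The only substantive step is the appeal to the two known classification results; the rest is bookkeeping with the identification of $\Delta_{G^{(s_1,\ldots,s_n)}}$ and $\Delta_G^{(s_1,\ldots,s_n)}$. The main obstacle, if any, is simply ensuring that the cited results about chordal and bipartite graphs are available in the form used here; no new combinatorial or homological work is needed beyond the already-proved Theorem \ref{evd} and Corollary \ref{cor2}.
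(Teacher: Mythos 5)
Your proposal is correct and follows essentially the same route as the paper: cite Woodroofe for vertex decomposability of chordal graphs, cite Van Tuyl (the paper uses \cite[Corollary 2.12]{VT}, Van Tuyl alone rather than Van Tuyl--Villarreal) for Cohen--Macaulay bipartite graphs, and then apply Corollary \ref{cor2}. No gaps.
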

\begin{proof}
By \cite[Corollary 7]{W} and \cite[Corollary 2.12]{VT} chordal graphs and Cohen-Macaulay bipartite graphs are vertex decomposable.  The result now follows from Corollary \ref{cor2}.
\end{proof}

In the following theorem, it is shown that shellability is preserved under expansion and from a shelling for $\Delta$, a shelling for its expansion is constructed.
\begin{thm}\label{vI}
Let $\Delta$ be a shellable  simplicial complex with $n$ vertices. Then  $\Delta^{(s_1,\ldots,s_n)}$ is shellable for any $s_1,\ldots,s_n\in \mathbb{N}$.
\end{thm}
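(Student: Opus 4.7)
The plan is to lift a shelling of $\Delta$ to a shelling of $\Delta^{(s_1,\ldots,s_n)}$ by ordering the expanded facets first according to the given shelling and then lexicographically on the ``expansion indices''. Concretely, fix a shelling $F_1<F_2<\cdots<F_m$ of $\Delta$. For each $F_i=\{x_{i_1},\ldots,x_{i_{k_i}}\}$ with $i_1<\cdots<i_{k_i}$, the facets of $\Delta^{(s_1,\ldots,s_n)}$ coming from $F_i$ are the $F_i^{r_1,\ldots,r_{k_i}}$ indexed by tuples $\mathbf{r}\in[s_{i_1}]\times\cdots\times[s_{i_{k_i}}]$. Declare
$$F_i^{\mathbf{r}}\prec F_j^{\mathbf{r}'}\quad\text{iff}\quad i<j,\ \text{or}\ i=j\ \text{and}\ \mathbf{r}<_{\mathrm{lex}}\mathbf{r}'.$$
I will verify that $\prec$ is a shelling by producing, for any pair $F_i^{\mathbf{r}}\prec F_j^{\mathbf{r}'}$, a vertex $v\in F_j^{\mathbf{r}'}\setminus F_i^{\mathbf{r}}$ and a $\prec$-earlier facet $F_\ell^{\mathbf{r}''}$ with $F_j^{\mathbf{r}'}\setminus F_\ell^{\mathbf{r}''}=\{v\}$.

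The first case, $i=j$, is handled entirely by the lex order. Let $p$ be the first coordinate on which $\mathbf{r}$ and $\mathbf{r}'$ differ, so $r_p<r'_p$; then take $v=x_{i_p r'_p}$ and the earlier facet obtained from $F_i$ by the tuple $(r'_1,\ldots,r'_{p-1},r_p,r'_{p+1},\ldots,r'_{k_i})$, which is still $\prec F_i^{\mathbf{r}'}$ and differs from it in exactly the coordinate $p$.

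The second case is $i<j$. Apply shellability of $\Delta$ to the pair $F_i<F_j$: there is an $x_{j_p}\in F_j\setminus F_i$ and an $\ell<j$ with $F_j\setminus F_\ell=\{x_{j_p}\}$, so $F_\ell=(F_j\setminus\{x_{j_p}\})\cup\{y\}$ for some $y\notin F_j$. Set $v=x_{j_p r'_p}$; since $x_{j_p}\notin F_i$, the vertex $v$ cannot appear in $F_i^{\mathbf{r}}$. For the earlier facet, take the expansion $F_\ell^{\mathbf{r}''}$ where $\mathbf{r}''$ reuses the entries $r'_q$ for each $q\neq p$ (these indices correspond to vertices of $F_j$ other than $x_{j_p}$, which all lie in $F_\ell$) and assigns an arbitrary value in $[s_y]$ (say $1$) to $y$. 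Then $F_\ell^{\mathbf{r}''}$ and $F_j^{\mathbf{r}'}$ differ precisely in that $y_1$ replaces $x_{j_pr'_p}$, so $F_j^{\mathbf{r}'}\setminus F_\ell^{\mathbf{r}''}=\{v\}$, and $F_\ell^{\mathbf{r}''}\prec F_j^{\mathbf{r}'}$ because $\ell<j$.

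The main obstacle, though largely notational, is keeping straight the two distinct indexings (the order of the original facets, and the lex order on expansion tuples of possibly different length $k_i$), and verifying carefully in the second case that the tuple $\mathbf{r}''$ is well-defined on the vertices of $F_\ell$ (it is, since the vertices $F_\ell\setminus\{y\}=F_j\setminus\{x_{j_p}\}$ inherit their expansion indices from $\mathbf{r}'$). Once that bookkeeping is done, the verification is exactly the two cases sketched above, and together they confirm that the prescribed order $\prec$ is a shelling of $\Delta^{(s_1,\ldots,s_n)}$.
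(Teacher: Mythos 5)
Your proposal is essentially the paper's own proof: the same order (first by the shelling index $i$, then lexicographically on the expansion tuples) and the same two-case verification, so it is correct in approach and in substance. The only slip is the parenthetical claim that $F_\ell=(F_j\setminus\{x_{j_p}\})\cup\{y\}$ for a \emph{single} vertex $y\notin F_j$: since the shelling may be non-pure, the condition $F_j\setminus F_\ell=\{x_{j_p}\}$ only forces $F_j\setminus\{x_{j_p}\}\subseteq F_\ell$, and $F_\ell$ may contain several vertices outside $F_j$. This does not damage the argument --- you simply assign the index $1$ to \emph{every} vertex of $F_\ell\setminus F_j$ when forming $\mathbf{r}''$, which still gives $F_j^{\mathbf{r}'}\setminus F_\ell^{\mathbf{r}''}=\{x_{j_pr'_p}\}$; this is exactly what the paper does by setting $r''_\lambda=1$ for all remaining indices $\lambda$.
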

\begin{proof}
Use the notations as in Definition \ref{2.1}. Let $\Delta$ be a shellable simplicial complex with the shelling order $F_1<\cdots<F_m$ on the facets of $\Delta$. Consider an order on $\mathcal{F}(\Delta^{(s_1,\ldots,s_n)})$ as follows.
For two facets $F_i^{r_1,\ldots, r_{k_i}}$ and $F_j^{r'_1,\ldots, r'_{k_j}}$ of $\Delta^{(s_1,\ldots,s_n)}$
\begin{itemize}
\item[(i)] if $i<j$, set $F_i^{r_1,\ldots, r_{k_i}}<F_j^{r'_1,\ldots, r'_{k_j}}$,
\item[(ii)] if $i=j$, set $F_i^{r_1,\ldots, r_{k_i}}<F_i^{r'_1,\ldots, r'_{k_i}}$, when $(r_1,\ldots, r_{k_i})<_{lex} (r'_1,\ldots, r'_{k_i})$.
\end{itemize}
We show that this ordering forms a shelling order. Consider two facets $F_i^{r_1,\ldots, r_{k_i}}$ and $F_j^{r'_1,\ldots, r'_{k_j}}$ with $i<j$.
Since $F_i<F_j$, there exists an integer $\ell<j$ and $x_{j_t}\in F_j\setminus F_i$ such that $F_j\setminus F_\ell=\{x_{j_t}\}$. So
$x_{j_tr'_t}\in F_j^{r'_1,\ldots, r'_{k_j}}\setminus F_i^{r_1,\ldots, r_{k_i}}$. Let
$F_\ell=\{x_{\ell_1},\ldots,x_{\ell_{k_\ell}}\}$, where $\ell_1<\cdots<\ell_{k_\ell}$. Then there exist indices $h_1,\ldots,h_{t-1},h_{t+1},\ldots,h_{k_j}$ such that
$j_1=\ell_{h_1},\ldots, j_{t-1}=\ell_{h_{t-1}},j_{t+1}=\ell_{h_{t+1}},\ldots,j_{k_j}=\ell_{h_{k_j}}$.
Thus
$$F_j^{r'_1,\ldots, r'_{k_j}}\setminus F_\ell^{r''_1,\ldots,r''_{k_\ell}}=\{x_{j_tr'_t}\},$$ where
$r''_{h_1}=r'_1,\ldots,r''_{h_{t-1}}=r'_{t-1},r''_{h_{t+1}}=r'_{t+1},\ldots,r''_{h_{k_j}}=r'_{k_j}$ and $r''_{\lambda}=1$ for other indices $\lambda$. Since $\ell<j$, we have $F_\ell^{r''_1,\ldots,r''_{k_\ell}}<F_j^{r'_1,\ldots, r'_{k_j}}$.

Now assume that $i=j$ and $F_i^{r_1,\ldots, r_{k_i}}<F_i^{r'_1,\ldots, r'_{k_i}}$. Thus $$(r_1,\ldots, r_{k_i})<_{lex} (r'_1,\ldots, r'_{k_i}).$$ Let $1\leq t\leq k_i$ be an integer with $r_t<r'_t$. Then $x_{i_tr'_t}\in F_i^{r'_1,\ldots, r'_{k_i}}\setminus
F_i^{r_1,\ldots, r_{k_i}}$, $$F_i^{r'_1,\ldots,r'_{k_i}}\setminus F_i^{r'_1,\ldots,r'_{t-1},r_t,r'_{t+1},\ldots, r'_{k_i}}=\{x_{i_tr'_t}\}$$
and $$(r'_1,\ldots,r'_{t-1},r_t,r'_{t+1},\ldots, r'_{k_i})<_{lex} (r'_1,\ldots,r'_{k_i}).$$ Thus $F_i^{r'_1,\ldots,r'_{t-1},r_t,r'_{t+1},\ldots, r'_{k_i}}<F_i^{r'_1,\ldots,r'_{k_i}}$. The proof is complete.
\end{proof}

The following corollary is an immediate consequence of Theorem \ref{vI}, Remark \ref{pure} and \cite[Theorem 5.3.18]{VIL}.
\begin{cor}\label{cor1}
Let $\Delta$ be a pure shellable simplicial complex. Then $\Delta^{(s_1,\ldots,s_n)}$ is Cohen-Macaulay for any $s_1,\ldots,s_n\in \mathbb{N}$.
\end{cor}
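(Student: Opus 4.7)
The plan is to chain together the three results flagged by the authors. My target is to show that $\Delta^{(s_1,\ldots,s_n)}$ is a pure shellable simplicial complex, since a pure shellable simplicial complex is Cohen-Macaulay by the classical result \cite[Theorem 5.3.18]{VIL}.

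First, I would invoke Theorem \ref{vI}: since $\Delta$ is shellable, the expansion $\Delta^{(s_1,\ldots,s_n)}$ inherits shellability via the explicit ordering constructed in the proof of that theorem (lexicographically extending a shelling of $\Delta$ to all vertex choices $F_i^{r_1,\ldots,r_{k_i}}$). Second, I would invoke Remark \ref{pure}: since each facet $F_i^{r_1,\ldots,r_{k_i}}$ of $\Delta^{(s_1,\ldots,s_n)}$ has cardinality equal to $|F_i|$, purity of $\Delta$ translates directly into purity of $\Delta^{(s_1,\ldots,s_n)}$. Combining these two observations, $\Delta^{(s_1,\ldots,s_n)}$ is both pure and shellable, so \cite[Theorem 5.3.18]{VIL} finishes the proof.

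There is no real obstacle here: all the heavy lifting was done in Theorem \ref{vI} (which produces the shelling) and in Remark \ref{pure} (which records the preservation of cardinalities of facets). The corollary is essentially a formal consequence, and the proof should fit in two or three lines.
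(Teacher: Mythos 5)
Your proposal is correct and matches the paper exactly: the authors state that the corollary is an immediate consequence of Theorem \ref{vI} (shellability of the expansion), Remark \ref{pure} (purity of the expansion), and \cite[Theorem 5.3.18]{VIL} (pure shellable implies Cohen-Macaulay), which is precisely the chain you describe.
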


\begin{thm}\label{one dimension}
Let $\Delta$ be a pure one dimensional simplicial complex. Then the following statements are equivalent.
\begin{itemize}
\item[(i)] $\Delta$ is connected.
\item[(ii)] $\Delta$ is vertex decomposable.
\item[(iii)] $\Delta$ is shellable.
\item[(iv)] $\Delta$ is sequantially Cohen-Macaulay.
\item[(v)] $\Delta$ is Cohen-Macaulay.
\end{itemize}
\end{thm}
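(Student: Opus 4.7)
The plan is to close the cycle $(i)\Rightarrow (ii)\Rightarrow (iii)\Rightarrow (iv)\Rightarrow (v)\Rightarrow (i)$. Three of these implications hold for arbitrary simplicial complexes: $(ii)\Rightarrow (iii)$ is the theorem of Bj\"{o}rner and Wachs that every vertex decomposable complex is shellable, $(iii)\Rightarrow (iv)$ is the standard fact that shellable complexes are sequentially Cohen--Macaulay, and $(v)\Rightarrow (iv)$ is immediate from the definition (take the trivial filtration $0\subset M$). For $(iv)\Rightarrow (v)$ I would use purity: $R/I_\Delta$ is then equidimensional, which forces any sequentially Cohen--Macaulay filtration to collapse to a trivial one, so $R/I_\Delta$ is itself Cohen--Macaulay. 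The implication $(v)\Rightarrow (i)$ is Reisner's criterion applied at the empty face, yielding $\widetilde{H}_0(\Delta;K)=0$, i.e., connectedness.

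The substantive step is $(i)\Rightarrow (ii)$, which I would prove by induction on $|V(\Delta)|$. Since $\Delta$ is pure one-dimensional, its facets are exactly the edges of a graph $G$ on $V(\Delta)$ with no isolated vertices, and hypothesis $(i)$ asserts connectedness of $G$. If $|V(\Delta)|=2$ then $\Delta$ is a single edge, hence a simplex and vertex decomposable. Otherwise $|V(\Delta)|\geq 3$; pick a spanning tree $T$ of $G$ and let $x$ be a leaf of $T$. I claim $x$ is a shedding vertex with vertex decomposable link and deletion. Because $x$ is a leaf of $T$, the tree $T-x$ is connected on at least two vertices, so every vertex of $V(G)\setminus\{x\}$ is incident to an edge of $T-x\subseteq G-x$; hence $G-x$ has no isolated vertex. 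This simultaneously yields (a) every facet of $\T{del}_\Delta(x)$ is an edge of $G$ and thus a facet of $\Delta$ (the shedding condition) and (b) $\T{del}_\Delta(x)$ is again pure one-dimensional. Connectedness of $T-x$ also makes $\T{del}_\Delta(x)$ connected, so by the induction hypothesis $\T{del}_\Delta(x)$ is vertex decomposable; and $\T{lk}_\Delta(x)$ is the discrete complex on the $G$-neighbors of $x$, which is trivially vertex decomposable.

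The main obstacle is producing a single vertex $x$ that guarantees both the shedding condition (no neighbor of $x$ is left stranded after deletion) and the inductive condition (the remainder is connected and stays pure one-dimensional). A leaf of $G$ itself would satisfy the first requirement, but $G$ may have no leaves at all (for instance when $G$ is a cycle). The decisive trick is to work with a leaf of a spanning tree of $G$ rather than of $G$ itself: this automatically delivers both properties, since $T-x$ is a connected spanning subgraph of $G-x$.
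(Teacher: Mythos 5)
Your proposal is correct and follows essentially the same route as the paper: the same cycle of implications, with $(ii)\Rightarrow(iii)$, $(iii)\Rightarrow(iv)$, $(iv)\Rightarrow(v)$ and $(v)\Rightarrow(i)$ handled by the standard citations and $(i)\Rightarrow(ii)$ by induction, deleting a vertex whose removal keeps the underlying graph connected and without isolated vertices. Your choice of a leaf of a spanning tree is just a clean, unified way of producing the vertex that the paper obtains by splitting into the free-vertex and no-free-vertex cases and appealing to the fact that every connected graph has two non-cut vertices.
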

\begin{proof}
\begin{itemize}
  \item[$(i\Rightarrow ii)$] Suppose that $\Delta=\langle F_1, \dots, F_m\rangle$. We use induction on $m$. If $m=1$, $\Delta$ is clearly vertex decomposable. Suppose inductively that the result has been proved for smaller values of $m$. We consider two cases. If $\Delta$ has a free vertex (a vertex which belongs to only one facet), then there is a facet, say $F_m=\{x,y\}$, of $\Delta$ such that $x\not\in \bigcup_{i=1}^{m-1}F_i$. In this case
      $\T{lk}_\Delta(x)=\langle \{ y\}\rangle,$
      which is clearly vertex decomposable.
      Also, since $\Delta$ is connected,
      $$\T{del}_\Delta(x)=\langle F_1,\dots, F_{m-1}\rangle$$ is a pure one dimensional connected simplicial complex. So, by inductive hypothesis $\T{del}_\Delta(x)$ is also vertex decomposable. Moreover each facet of $\T{del}_\Delta(x)$ is a facet of $\Delta$. This shows that $\Delta$ is vertex decomposable. Now, suppose that  $\Delta$ doesn't have any free vertex. So, each vertex belongs to at least two facets. Hence, there is a vertex $x$ such that $\T{del}_\Delta(x)$ is also connected and one dimensional. (Note that since $\Delta$ is connected and one dimensional, it may be illustrated as a connected graph. Also, from graph theory, we know that every connected graph has at least two vertices such that by deleting them, we still have a connected graph). Now, by induction hypothesis we have that $\T{del}_\Delta(x)$ is vertex decomposable. Also, $\T{lk}_\Delta(x)$ is a discrete set and so vertex decomposable. Furthermore, in view of the choice of $x$, it is clear that every facet of $\T{del}_\Delta(x)$ is a facet of $\Delta$. Hence, $\Delta$ is vertex decomposable as desired.
  \item[$(ii\Rightarrow iii)$] follows from \cite[Theorem 11.3]{BW2}.
  \item[$(iii\Rightarrow iv)$] is firstly shown by Stanley in \cite{Stanley}.
  \item[$(iv\Rightarrow v)$] The result follows from the fact that every pure sequantially Cohen-Macaulay simplicial complex is Cohen-Macaulay.
  \item[$(v\Rightarrow i)$] follows from \cite[Corollary 5.3.7]{VIL}.
\end{itemize}
\end{proof}

\begin{cor}\label{CM}
Let $\Delta$ be a Cohen-Macaulay simplicial complex of dimension one. Then $\Delta^{(s_1,\ldots,s_n)}$ is Cohen-Macaulay for any $s_1,\ldots,s_n\in \mathbb{N}$.
\end{cor}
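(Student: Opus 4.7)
The plan is to chain together the equivalences of Theorem \ref{one dimension} with Corollary \ref{cor2}, which is exactly what this corollary is set up to do. The statement is essentially a combination of two already-proved results.

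First I would observe that a Cohen--Macaulay simplicial complex is pure (this is standard, and implicit in the setup of Theorem \ref{one dimension}). Since $\Delta$ is pure of dimension one and Cohen--Macaulay, the implication $(v) \Rightarrow (ii)$ of Theorem \ref{one dimension} gives that $\Delta$ is vertex decomposable.

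Next, I would invoke Corollary \ref{cor2}: since $\Delta$ is vertex decomposable, it is Cohen--Macaulay if and only if $\Delta^{(s_1,\ldots,s_n)}$ is Cohen--Macaulay. As $\Delta$ is Cohen--Macaulay by hypothesis, we conclude that $\Delta^{(s_1,\ldots,s_n)}$ is Cohen--Macaulay.

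There is essentially no obstacle here; the only thing to double-check is that the hypotheses of Theorem \ref{one dimension} and Corollary \ref{cor2} are met (purity, dimension one, vertex decomposability), all of which are immediate from the assumption or from \textbf{Remark \ref{pure}}. The proof is therefore just a two-line invocation of previous results.
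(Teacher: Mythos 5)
Your argument is correct and essentially matches the paper's: the paper applies Theorem \ref{one dimension} to conclude that $\Delta$ is pure shellable and then invokes Corollary \ref{cor1}, whereas you take the parallel route through vertex decomposability and Corollary \ref{cor2}; both work equally well. One small correction: the purity of $\Delta$ follows from the standard fact that Cohen--Macaulay complexes are pure (equidimensionality), not from Remark \ref{pure}, which only transfers purity between $\Delta$ and $\Delta^{(s_1,\ldots,s_n)}$.
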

\begin{proof}
Since $\Delta$ is Cohen-Macaulay of dimension one, Theorem \ref{one dimension} implies that $\Delta$ is pure shellable. Hence, Corollary \ref{cor1} yields the result.
\end{proof}

The evidence suggests when $\Delta$ is Cohen-Macaulay, its expansions are also Cohen-Macaulay. Corollaries \ref{cor2}, \ref{cor3}, \ref{cor1} and \ref{CM} are some results in this regard. But in general, we did not get to a proof or a counter example for this statement. So, we just state it as a conjecture as follows.

\textbf{Conjecture.} If $\Delta$ is a  Cohen-Macaulay simplicial complex, then $\Delta^{(s_1,\ldots,s_n)}$ is Cohen-Macaulay for any $s_1,\ldots,s_n\in \mathbb{N}$.

\section{Homological invariants of expansions of a simplicial complex}
We begin this section with the next theorem which presents formulas for the projective dimension and depth of the Stanley-Reisner ring of an expansion of a shellable simplicial complex in terms of the corresponding invariants of the Stanley-Reisner ring of the simplicial complex.

\begin{thm}\label{pd}
Let $\Delta$ be a shellable simplicial complex with the vertex set $\{x_1,\ldots,x_n\}$, $s_1,\ldots,s_n\in \mathbb{N}$ and $R=K[x_1,\ldots,x_n]$ and $R'=K[x_{11},\ldots,x_{1s_1},\ldots,x_{n1},\ldots,x_{ns_n}]$ be polynomial rings over a field $K$. Then $$\T{pd}(R'/I_{\Delta^{(s_1,\ldots,s_n)}})=\T{pd}(R/I_{\Delta})+s_1+\cdots+s_n-n$$
and
$$\T{depth}(R'/I_{\Delta^{(s_1,\ldots,s_n)}})=\T{depth}(R/I_{\Delta}).$$
\end{thm}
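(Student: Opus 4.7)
The plan is to reduce both identities to a regularity computation on Alexander duals. First, note that by the Auslander--Buchsbaum formula applied to the polynomial rings $R$ and $R'$ (of depth $n$ and $N := s_1+\cdots+s_n$ respectively), the two stated identities are equivalent, so it suffices to prove the projective-dimension equality.

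By Theorem \ref{vI}, $\Delta^{(s_1,\ldots,s_n)}$ inherits shellability from $\Delta$. I would then invoke the standard fact that for any shellable simplicial complex with shelling $F_1 < \cdots < F_m$, the Alexander-dual ideal has linear quotients with the generators $u_i = x^{F_i^c}$ listed in that order: a direct check shows $(u_1,\ldots,u_{i-1}):u_i$ is generated by the variables $\{x_v : v \in F_i,\ F_i \setminus F_\ell = \{v\}\ \text{for some}\ \ell < i\}$. The generator degrees are $\deg u_i = n - |F_i|$ for $\Delta$, and by Remark \ref{pure} the analogous degrees for $\Delta^{(s_1,\ldots,s_n)}$ are $N - |F_i|$, since facet cardinalities are preserved under expansion.

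The key quantitative step is the regularity formula $\reg(J) = \max_i \deg(u_i)$ for any monomial ideal $J$ with linear quotients and possibly non-uniform generator degrees. This follows from the iterated mapping-cone construction: at step $i$ one grafts a shifted Koszul complex on the variables generating $(u_1,\ldots,u_{i-1}):u_i$, and since every comparison map has entries in the graded maximal ideal the cone remains minimal, so the internal shifts of its summands are bounded by $\max_i \deg(u_i)$. Applied to both Alexander-dual ideals this yields $\reg(I_{\Delta^{\vee}}) = n - \min_i|F_i|$ and $\reg(I_{(\Delta^{(s_1,\ldots,s_n)})^{\vee}}) = N - \min_i|F_i|$, with the same minimum in both expressions.

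Finally, applying Terai's theorem (Theorem \ref{1.3}) in the equivalent form $\pd(R/I_\Delta) = \reg(I_{\Delta^{\vee}})$---obtained from the stated $\pd(I^{\vee}) = \reg(R/I)$ by substituting $I^{\vee}$ for $I$ and using $\pd(R/I) = \pd(I)+1$ and $\reg(I) = \reg(R/I)+1$---and subtracting the two resulting regularity equalities delivers the projective-dimension identity, whence the depth identity via Auslander--Buchsbaum. The principal obstacle is the extension of Lemma \ref{Faridi} to the non-equigenerated linear-quotient setting used above, which is not stated explicitly in the preliminaries but is standard in the literature (e.g.\ Herzog and Takayama on linear quotients); one either supplies a short inline mapping-cone argument or quotes such a reference.
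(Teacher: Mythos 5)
Your argument is correct, but it follows a genuinely different route from the paper's. The paper's proof is shorter and more black-box: it observes that $\Delta$ and (by Theorem \ref{vI}) $\Delta^{(s_1,\ldots,s_n)}$ are shellable, hence sequentially Cohen--Macaulay, and then invokes the Morey--Villarreal identity $\T{pd}(R/I_\Delta)=\bight(I_\Delta)$ for sequentially Cohen--Macaulay quotients; since the minimal primes of a Stanley--Reisner ideal are the facet complements, $\bight(I_\Delta)=n-k$ with $k=\min_i|F_i|$, and the whole theorem reduces to the observation that expansion preserves $k$ while replacing $n$ by $N=s_1+\cdots+s_n$. You instead unpack what is essentially the proof of that cited identity: shellability gives linear quotients on $I_{\Delta^\vee}$ via Herzog--Hibi--Zheng, linear quotients give $\reg(J)=\max_i\deg(u_i)$, and Terai converts $\reg(I_{\Delta^\vee})=n-k$ into $\T{pd}(R/I_\Delta)=n-k$. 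Both proofs hinge on the same combinatorial fact (preservation of the minimum facet cardinality, cf.\ Remark \ref{pure}) and both finish with Auslander--Buchsbaum. Two small remarks on your version: the ``principal obstacle'' you flag --- the regularity formula for non-equigenerated ideals with linear quotients --- is already available inside the paper, since Theorem \ref{Leila} (Sharifan--Varbaro) gives $\beta_{i,j}(I)=\sum_{\deg(f_t)=j-i}\binom{|\set_I(f_t)|}{i}$ and hence immediately $\reg(I)=\max_t\deg(f_t)$, so no appeal to an external mapping-cone minimality argument is needed; and your reformulation of Terai's theorem as $\T{pd}(R/I_\Delta)=\reg(I_{\Delta^\vee})$ is the standard and correct one. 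Your approach is more self-contained relative to the tools the paper already deploys in Proposition \ref{shreg}, at the cost of being longer than the paper's citation of \cite[Corollary 3.33]{MVi}.
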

\begin{proof}

Let $\Delta$ be a shellable simplicial complex. Then it is sequentially Cohen-Macaulay. By Theorem \ref{vI}, $\Delta^{(s_1,\ldots,s_n)}$ is also shellable and then
sequentially Cohen-Macaulay. Thus by \cite[Corollary 3.33]{MVi}, $$\T{pd}(R'/I_{\Delta^{(s_1,\ldots,s_n)}})=\bight(I_{\Delta^{(s_1,\ldots,s_n)}})$$
and $$\T{pd}(R/I_{\Delta})=\bight(I_{\Delta}).$$ Let $k=\min\{|F|:\ F\in \mathcal{F}(\Delta)\}$. It is easy to see that
$\min\{|F|:\ F\in \mathcal{F}(\Delta^{(s_1,\ldots,s_n)})\}=k$. Then $\bight(I_{\Delta})=n-k$ and
$$\bight(I_{\Delta^{(s_1,\ldots,s_n)}})=|V(\Delta^{(s_1,\ldots,s_n)})|-k=s_1+\cdots+s_n-k=s_1+\cdots+s_n+\T{pd}(R/I_{\Delta})-n.$$
The second equality holds by Auslander-Buchsbaum formula, since $\T{depth}(R')=s_1+\cdots+s_n$.
\end{proof}

In the following example, we compute the invariants in Theorem \ref{pd} and illustrate the equalities.
\begin{exam}
{\rm Let $\Delta=\langle \{x_1,x_2,x_3\}, \{x_1,x_2,x_4\},\{x_4,x_5\}\rangle$. Then $\Delta$ is shellable with the order as listed in $\Delta$. Then
$$\Delta^{(1,1,2,1,2)}=\langle \{x_{11},x_{21},x_{31}\},\{x_{11},x_{21},x_{32}\},\{x_{11},x_{21},x_{41}\},\{x_{41},x_{51}\},\{x_{41},x_{52}\}\rangle.$$
computations by Macaulay2 \cite{GS}, show that $\T{pd}(R/I_{\Delta})=3$ and $\T{pd}(R'/I_{\Delta^{(s_1,\ldots,s_n)}})=5=\T{pd}(R/I_{\Delta})+s_1+\cdots+s_n-n=3+1+1+2+1+2-5$.
Also $\T{depth}(R'/I_{\Delta^{(s_1,\ldots,s_n)}})=\T{depth}(R/I_{\Delta})=2$. }
\end{exam}

The following result, which is a special case of  \cite[Corollary 2.7]{Leila}, is our main tool to prove Proposition \ref{shreg}.

\begin{thm}(See \cite[Corollary 2.7]{Leila}.)\label{Leila}
Let $I$ be a monomial ideal with linear quotients with the ordering $f_1<\cdots<f_m$ on the minimal generators of $I$.
Then $$\beta_{i,j}(I)=\sum_{\deg(f_t)=j-i} {|\set_I(f_t)|\choose i}.$$
\end{thm}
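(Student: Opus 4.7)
The plan is to prove the formula by induction on the number $m$ of minimal generators of $I$, constructing a minimal free resolution via an iterated mapping cone. The base case $m=1$ is immediate: $I=(f_1)$ with $\set_I(f_1)=\emptyset$, and the formula correctly gives $\beta_{0,\deg f_1}(I)=1$ with all other Betti numbers vanishing. For the inductive step, set $I'=(f_1,\ldots,f_{m-1})$, which inherits linear quotients from the ordering, and write $d_m=\deg f_m$. By the linear quotients hypothesis $(I':f_m)$ is generated by the variables in $\set_I(f_m)$, so the Koszul complex on those $|\set_I(f_m)|$ variables is a minimal free resolution of $R/(I':f_m)$. From the short exact sequence
\begin{equation*}
0 \to \bigl(R/(I':f_m)\bigr)(-d_m) \xrightarrow{\cdot f_m} R/I' \to R/I \to 0,
\end{equation*}
the mapping cone of any lift of $\cdot f_m$ to a chain map from the shifted Koszul complex into the inductively constructed minimal resolution $\mathbb{F}'$ of $R/I'$ yields a free resolution of $R/I$.

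The heart of the argument is to verify that this mapping cone resolution is minimal. I would choose the comparison chain map $\psi$ in such a way that every matrix entry lies in the homogeneous maximal ideal. The linear quotients hypothesis provides exactly this: for each $x\in \set_I(f_m)$ there is some $t<m$ with $xf_m\in(f_t)$, and the monomial $xf_m/f_t$, which has positive degree, serves as an entry of $\psi_0$. Higher $\psi_i$ are built by routine lifting, and since both source and target complexes are already minimal, any homogeneous lift is forced by degree bookkeeping to have all entries in the maximal ideal. This is the step where the linear quotients property is essential, since mapping cones of minimal resolutions are not minimal in general.

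Granted minimality, the mapping cone is a minimal resolution of $R/I$, and its truncation is a minimal resolution of $I$ whose free basis at homological degree $i$ is indexed by pairs $(t,S)$ with $1\le t\le m$ and $S\subseteq \set_I(f_t)$ of cardinality $i$, with the basis element sitting in internal degree $\deg(f_t)+i$. Counting basis elements in prescribed bidegree $(i,j)$ then yields
\begin{equation*}
\beta_{i,j}(I)=\sum_{\deg(f_t)=j-i}\binom{|\set_I(f_t)|}{i},
\end{equation*}
as claimed. The main obstacle throughout is the minimality verification: one cannot treat the mapping cone as a black box, but must inductively arrange the chain-map lifts $\psi$ so that no unit entries appear. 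The linear quotients hypothesis is precisely what makes this possible, by supplying the required positive-degree monomial entries at the zeroth stage and then propagating minimality through the lifting process.
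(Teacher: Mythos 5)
The paper does not actually prove this statement---it is imported verbatim from Sharifan--Varbaro \cite[Corollary 2.7]{Leila}---so I am judging your argument on its own terms. Your architecture (iterated mapping cone over the exact sequence $0\to (R/(I'{:}f_m))(-d_m)\to R/I'\to R/I\to 0$, minimality of the cone, then counting basis elements $(t,S)$ with $S\subseteq\set_I(f_t)$) is exactly the standard route, and granted minimality your bookkeeping of the bidegrees is correct. The gap is in the minimality verification itself. The claim that ``any homogeneous lift is forced by degree bookkeeping to have all entries in the maximal ideal'' is false once the generators have different degrees, which the theorem permits and which the present paper actually needs (for a non-pure shellable $\Delta$ the ideal $I_{\Delta^{\vee}}$ is not equigenerated). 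An entry of $\psi_i$ maps a basis element of $K_i(-d_m)$, sitting in internal degree $d_m+i$, to a basis element of $\mathbb{F}'_i$ indexed by a pair $(t,S)$ with $|S|=i-1$, sitting in internal degree $\deg(f_t)+i-1$; such an entry is homogeneous of degree $d_m+1-\deg(f_t)$, which is zero precisely when $\deg(f_t)=\deg(f_m)+1$. Linear quotients orderings in which an earlier generator has degree one more than a later one do occur: for instance $I=(bc,\,acd,\,ab)$ in $K[a,b,c,d]$ with the order $bc<acd<ab$ has linear quotients (the colon ideals are $(b)$ and $(c)$), yet $\deg(acd)=\deg(ab)+1$. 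In that situation grading alone does not exclude a unit entry, and your argument gives no reason why one cannot appear.

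Closing this gap is the actual content of the cited result: one must construct the comparison maps explicitly from the divisibility structure (the decomposition function $x_k f_i\mapsto f_{\sigma(k)}$ attached to the $\set$ function) and verify, using that $f_{\sigma(k)}$ properly divides $x_kf_i$ by minimality of the generating set, that no unit entries arise at any homological stage; equivalently, one can argue that the cone is a free resolution whose graded ranks coincide with lower bounds for the Betti numbers obtained by other means. Either way it cannot be dismissed as ``routine lifting.'' In the equigenerated case your degree argument does work (every basis element of $\mathbb{F}'_i$ then lives in degree $d+i-1$ while the Koszul basis lives in degree $d+i$), and there the statement reduces to the older Herzog--Takayama minimality result; so your proof is complete only in that special case.
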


\begin{prop}\label{shreg}
Let $\Delta=\langle F_1,\ldots,F_m\rangle$ be a shellable simplicial complex  with the vertex set $\{x_1,\ldots,x_n\}$, $s_1,\ldots,s_n\in \mathbb{N}$ and $R=K[x_1,\ldots,x_n]$ and $R'=K[x_{11},\ldots,x_{1s_1},\\ \ldots,x_{n1},\ldots,x_{ns_n}]$ be polynomial rings over a field $K$. Then
\begin{itemize}
\item[(i)] if $s_1,\ldots,s_n>1$, then
$\T{reg}(R'/I_{\Delta^{(s_1,\ldots,s_n)}})=\T{dim}(\Delta)+1=\T{dim}(R/I_{\Delta});$
 \item[(ii)] if  for each $1\leq i\leq m$, $\lambda_i=|\{x_\ell\in F_i:\ s_\ell>1\}|$, then $$\T{reg}(R'/I_{\Delta^{(s_1,\ldots,s_n)}})\leq \T{reg}(R/I_{\Delta})+\max \{\lambda_i:\ 1\leq i\leq m\}.$$
\end{itemize}
\end{prop}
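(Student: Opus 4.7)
The plan is to invoke the Alexander-dual technique. By Theorem~\ref{1.3}, $\T{reg}(R'/I_{\Delta^{(s_1,\ldots,s_n)}})=\T{pd}((I_{\Delta^{(s_1,\ldots,s_n)}})^\vee)$ and $\T{reg}(R/I_\Delta)=\T{pd}(I_{\Delta^\vee})$. Since $\Delta$ is shellable, so is $\Delta^{(s_1,\ldots,s_n)}$ by Theorem~\ref{vI}, and the standard correspondence between shellability and linear quotients of the Alexander dual ensures that both $I_{\Delta^\vee}$ and $(I_{\Delta^{(s_1,\ldots,s_n)}})^\vee$ have linear quotients, with the generators $x^{F^c}$ ordered according to the respective shellings. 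Applying Theorem~\ref{Leila}, the projective dimension of a linear-quotients ideal equals the maximum of $|\set_I(f_t)|$ over the generators, so the problem reduces to computing this maximum for $(I_{\Delta^{(s_1,\ldots,s_n)}})^\vee$.

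The main step is to describe $\set(x^{(F_i^{r_1,\ldots,r_{k_i}})^c})$ in the expansion's linear-quotients order. Writing $F_i=\{x_{i_1},\ldots,x_{i_{k_i}}\}$, the claim is that $x_{i_tr_t}$ belongs to this shedding set if and only if either (a)~$r_t>1$, or (b)~$x_{i_t}\in \set(x^{F_i^c})$ in the analogous order for $I_{\Delta^\vee}$, equivalently $F_i\setminus\{x_{i_t}\}\subseteq F_\ell$ for some $\ell<i$ in the shelling of $\Delta$. The justification is that $x_{i_tr_t}\in \set$ amounts to finding an earlier facet of $\Delta^{(s_1,\ldots,s_n)}$ that contains $F_i^{r_1,\ldots,r_{k_i}}\setminus\{x_{i_tr_t}\}$; by the two-level order of Theorem~\ref{vI}, such a facet is either of the form $F_i^{r'_1,\ldots,r'_{k_i}}$ with $(r'_1,\ldots,r'_{k_i})<_{lex}(r_1,\ldots,r_{k_i})$, which forces $r'_s=r_s$ for $s\neq t$ and $r'_t<r_t$ (giving case (a)), or of the form $F_j^{r'_1,\ldots,r'_{k_j}}$ with $j<i$, which forces $F_i\setminus\{x_{i_t}\}\subseteq F_j$ with matching replica indices on the overlap (giving case (b)). It follows that
$$|\set(x^{(F_i^{r_1,\ldots,r_{k_i}})^c})|=|\{t: r_t>1\}|+|\{t: r_t=1,\ x_{i_t}\in \set(x^{F_i^c})\}|.$$

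For (i), when every $s_\ell>1$, take a facet $F_i$ of maximal cardinality and choose any tuple with $r_t>1$ for all $t$; then every vertex of the facet lies in $\set$, yielding $|\set|=k_i=|F_i|=\dim(\Delta)+1$. Combined with the trivial upper bound $|\set|\leq k_i\leq \dim(\Delta)+1$ valid for every facet and every tuple, this gives $\T{pd}((I_{\Delta^{(s_1,\ldots,s_n)}})^\vee)=\dim(\Delta)+1=\dim(R/I_\Delta)$. For (ii), the displayed formula yields $|\set(x^{(F_i^{r_1,\ldots,r_{k_i}})^c})|\leq |\{t: s_{i_t}>1\}|+|\set(x^{F_i^c})|=\lambda_i+|\set(x^{F_i^c})|$; maximizing over $i$ and separating the maxima gives
$$\T{pd}\bigl((I_{\Delta^{(s_1,\ldots,s_n)}})^\vee\bigr)\leq \max_i\lambda_i+\T{reg}(R/I_\Delta),$$
which is exactly the stated bound. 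The main obstacle is the careful verification of the shedding-set description, in particular checking that in case (b) one can always produce a valid facet $F_j^{r'_1,\ldots,r'_{k_j}}$ of the expansion preceding $F_i^{r_1,\ldots,r_{k_i}}$ in the shelling, and that cases (a) and (b) jointly exhaust all possibilities.
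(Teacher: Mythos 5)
Your proposal is correct and follows essentially the same route as the paper: Terai's duality $\T{reg}(R'/I_{\Delta^{(s_1,\ldots,s_n)}})=\T{pd}((I_{\Delta^{(s_1,\ldots,s_n)}})^\vee)$, linear quotients of the dual induced by the shelling of Theorem~\ref{vI}, the Betti number formula of Theorem~\ref{Leila}, and the identification $\set(x^{(F_i^{r_1,\ldots,r_{k_i}})^c})=\{x_{i_\ell r_\ell}:\ x_{i_\ell}\in\set(x^{F_i^c})\}\cup\{x_{i_tr_t}:\ r_t>1\}$, which is exactly the paper's equality~(\ref{tasavi}). If anything, your case analysis verifies both inclusions of that equality (the paper only writes out the containment $\supseteq$), so no changes are needed.
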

\begin{proof}
Without loss of generality assume that $F_1<\cdots<F_m$ is a shelling for $\Delta$. We know that $I_{\Delta^{\vee}}$ has linear quotients with the ordering $x^{F_1^c}<\cdots<x^{F_m^c}$ on its minimal generators (see \cite[Theorem 1.4]{HD}). Moreover by Theorem \ref{1.3}, $\T{reg}(R'/I_{\Delta^{(s_1,\ldots,s_n)}})=\T{pd}(I_{\Delta^{(s_1,\ldots,s_n)^{\vee}}})$ and by \cite[Theorem 5.1.4]{BH} we have $\T{dim}(R/I_\Delta)=\T{dim}(\Delta)+1$. Thus, to prove (i), it is enough to show that $\T{pd}(I_{\Delta^{(s_1,\ldots,s_n)^{\vee}}})=\dim(\Delta)+1$. By Theorem \ref{Leila}, $\T{pd}(I_{\Delta^{\vee}})=\max\{|\set(x^{F_i^c})|:\  1\leq i\leq m\}$.
For any $1\leq i\leq m$, $\set(x^{F_i^c})\subseteq F_i$, since any element $x_\ell\in \set(x^{F_i^c})$ belongs to
$(x^{F_j^c}):_R(x^{F_i^c})$ for some $1\leq j<i$. Thus $x_\ell=x^{F_j^c}/\gcd(x^{F_j^c},x^{F_i^c})=x^{F_i\setminus F_j}$.  Let $F_i=\{x_{i_1},\ldots,x_{i_{k_i}}\}$ and $\set(x^{F_i^c})=\{x_{i_\ell}:\ \ell\in L_i\}$, where $L_i\subseteq \{1,\ldots,k_i\}$.
Consider the shelling for $\Delta^{(s_1,\ldots,s_n)}$ constructed in the proof of Theorem \ref{vI}. Using again of \cite[Theorem 1.4]{HD} shows that this shelling induces an order of linear quotients on the minimal generators of $I_{\Delta^{(s_1,\ldots,s_n)^{\vee}}}$. With this order
\begin{equation}\label{tasavi}
\set(x^{(F_i^{r_1,\ldots, r_{k_i}})^c})=\{x_{i_\ell r_\ell}:\ \ell\in L_i\}\cup \{x_{i_tr_t}:\ r_t>1\}.
\end{equation}
More precisely, if $r_t>1$ for some $1\leq t\leq k_i$, then
\begin{align*}
  x_{i_tr_t} & =x^{(F_i^{r_1,\ldots, r_{k_i}}\setminus F_i^{r_1,\ldots,r_{t-1},r_t-1,r_{t+1},\ldots, r_{k_i}})}\\
   & \in (x^{(F_i^{r_1,\ldots,r_{t-1},r_t-1,r_{t+1},\ldots, r_{k_i}})^c}):_{R'}(x^{(F_i^{r_1,\ldots, r_{k_i}})^c}).
\end{align*}
Hence, $x_{i_tr_t}\in \set(x^{(F_i^{r_1,\ldots, r_{k_i}})^c})$.
Also for any $x_{i_\ell}\in \set(x^{F_i^c})$, there exists $1\leq j<i$ such that $x_{i_\ell}=x^{F_i\setminus F_j}\in (x^{F_j^c}):_R(x^{F_i^c})$.
Thus there exist positive integers $r''_1,\ldots,r''_j$ such that
 \begin{align*}
   x_{i_\ell r_\ell} & =x^{(F_i^{r_1,\ldots,r_{k_i}}\setminus F_j^{r''_1,\ldots,r''_{k_j}})}\\
    & \in (x^{(F_j^{r''_1,\ldots, r''_{k_j}})^c}):_{R'}(x^{(F_i^{r_1,\ldots,r_{k_i}})^c}).
 \end{align*}
Hence, $x_{i_\ell r_\ell}\in \set(x^{(F_i^{r_1,\ldots, r_{k_i}})^c})$.
Now, if $s_1,\ldots,s_n>1$, then
$\set(x^{(F_i^{s_{i_1},\ldots, s_{i_{k_i}}})^c})=\{x_{i_1s_{i_1}},\ldots,x_{i_{k_i}s_{i_{k_i}}}\}.$ Thus
$$\T{pd}(I_{{\Delta^{(s_1,\ldots,s_n)}}^\vee})=\max\{|\set(x^{(F_i^{s_{i_1},\ldots, s_{i_{k_i}}})^c})|:\  1\leq i\leq m\}=\max\{|F_i|:\ 1\leq i\leq m \}=\T{dim}(\Delta)+1.$$  To prove (ii), notice that by equality \ref{tasavi}, $|\set(x^{(F_i^{r_1,\ldots, r_{k_i}})^c})|\leq |\set(x^{F_i^c})|+ \lambda_i$. Therefore $\T{pd}(I_{{\Delta^{(s_1,\ldots,s_n)}}^\vee})\leq \T{pd}(I_{\Delta^{\vee}})+\max \{\lambda_i:\ 1\leq i\leq m\}.$
Now, by Theorem \ref{1.3}, the result holds.
\end{proof}

\begin{exam}
{\rm
Consider the chordal graph $G$ depicted in Figure $3$ and its $(2,2,3,2,3)$-expansion which is a graph with $12$ vertices. Then $\Delta_G=\langle\{x_1,x_3\},\{x_3,x_5\},\{x_4,x_5\},\{x_2\}\rangle$.
Since $G$ is shellable, by Proposition \ref{shreg}, $\T{reg}(R'/I(G^{(2,2,3,2,3)}))=\dim(\Delta_G)+1=2$.
\begin{figure}[h4!]
\begin{tikzpicture} [scale = .48, very thick = 10mm]
  \node (n4) at (-23,5)  [Cblack] {$x_4$};
\node () at (-23,4.3)  [CW] {$x_3$};
\node (n3) at (-26,7)  [Cblack] {$3$};
\node () at (-26,7.7)  [CW] {$x_1$};
\node (n1) at (-26,5)  [Cblack] {$1$};
\node () at (-26,4.3)  [CW] {$x_4$};
\node (n2) at (-23,7)  [Cblack] {$2$};
\node () at (-23,7.7)  [CW] {$x_2$};
\node (n5) at (-24.5,9)  [Cblack] {$5$};
\node () at (-24.5,9.7)  [CW] {$x_5$};
\foreach \from/\to in {n1/n4,n1/n2,n2/n4,n1/n3,n2/n3,n5/n2,n3/n5}
    \draw[] (\from) -- (\to);
\node (q) at (-24.5,1)  [Cwhite] { $G$};


\node (n41) at (-12,5)  [Cblack] {$3$};
\node () at (-12,4.3)  [CW] {$x_{31}$};
\node (n42) at (-10.5,3)  [Cblack] {$3$};
\node () at (-10.5,2.3)  [CW] {$x_{32}$};
\node (n43) at (-9,4)  [Cblack] {$3$};
\node () at (-9,3.3)  [CW] {$x_{33}$};

\node (n31) at (-15,7)  [Cblack] {$1$};
\node () at (-14.5,6.5)  [CW] {$x_{11}$};
\node (n32) at (-17,8)  [Cblack] {$1$};
\node () at (-17.5,7.3)  [CW] {$x_{12}$};

\node (n11) at (-15,5)  [Cblack] {$4$};
\node () at (-16,5.1)  [CW] {$x_{41}$};
\node (n12) at (-16,3)  [Cblack] {$4$};
\node () at (-16,2.3)  [CW] {$x_{42}$};

\node (n21) at (-12,7)  [Cblack] {$2$};
\node () at (-13,6.6)  [CW] {$x_{21}$};
\node (n22) at (-9,7.7)  [Cblack] {$2$};
\node () at (-8.3,7.7)  [CW] {$x_{22}$};

\node (n51) at (-13.5,9)  [Cblack] {$5$};
\node () at (-14.1,9.5)  [CW] {$x_{51}$};
\node (n52) at (-11.5,10)  [Cblack] {$5$};
\node () at (-11,10.7)  [CW] {$x_{52}$};
\node (n53) at (-13.5,11)  [Cblack] {$5$};
\node () at (-13,11.7)  [CW] {$x_{53}$};
\foreach \from/\to in {n11/n12,n11/n41,n12/n41,n11/n42,n12/n42,n11/n43,n12/n43,
n21/n22,n11/n22,n11/n21,n11/n22,n12/n21,n12/n22,
n21/n41,n21/n42,n22/n41,n22/n42,n21/n43,n22/n43,
n11/n31,n12/n31,n11/n32,n12/n32,
n21/n31,n21/n32,n32/n22,n22/n31,
n51/n21,n52/n21,n53/n21,n51/n22,n52/n22,n53/n22,
n31/n51,n31/n52,n31/n53,n32/n51,n32/n52,n32/n53,
n31/n32,
n41/n42,n41/n43,n42/n43,n51/n52,n51/n53,n52/n53}
    \draw[] (\from) -- (\to);

\node (q) at (-13.5,1)  [Cwhite] {\ $G^{(2,2,3,2,3)}$};

\end{tikzpicture}
\caption{The graph $G$ and the $(2,2,3,2,3)$-expansion of $G$}\label{fig7}

\end{figure}
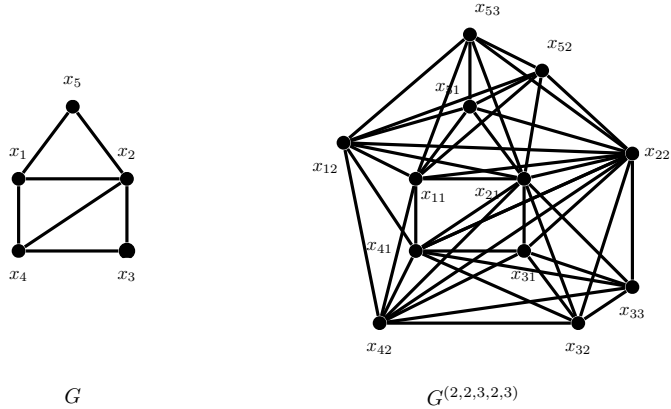
}

\end{exam}


\providecommand{\bysame}{\leavevmode\hbox
to3em{\hrulefill}\thinspace}

\end{document}